\newtheorem*{rep@theorem}{\rep@title}
\newcommand{\newreptheorem}[2]{%

\newenvironment{rep#1}[1]{%

\def\rep@title{#2 \ref{##1}}%

\begin{rep@theorem}}%

{\end{rep@theorem}}}
\def\clap#1{\hbox to 0pt{\hss#1\hss}}
\DeclareFontFamily{OMS}{rsfs}{\skewchar\font'60}
\DeclareFontShape{OMS}{rsfs}{m}{n}{<-5>rsfs5 <5-7>rsfs7 <7->rsfs10 }{}
\DeclareSymbolFont{rsfs}{OMS}{rsfs}{m}{n}
\DeclareSymbolFontAlphabet{\scr}{rsfs}
\newcommand{\sF}{\scr{F}}
\newcommand{\sO}{\scr{O}}
\newcommand{\sT}{\scr{T}}
\newcommand{\wtilde}{\widetilde}
\newcounter{thisthm}
\newcommand{\iref}[1]{(\thesection.\the\value{thisthm}.\the\value{#1})}
\theoremstyle{plain}    
\newtheorem{thm}{Theorem}[section]
\numberwithin{equation}{thm}
\numberwithin{figure}{section}
\theoremstyle{plain}    
\newtheorem{lem}[thm]{Lemma}
\newtheorem{conjecture}[thm]{Conjecture}
\theoremstyle{plain}    
\newtheorem{prop}[thm]{Proposition}
\newtheorem{proclaim-special}[thm]{\specialthmname}
\theoremstyle{remark}
\newtheorem{rem}[thm]{Remark}
\newtheorem{claim}[thm]{Claim} 
\newtheorem*{claim*}{Claim}
\newtheoremstyle{bozont-remark}{3pt}{3pt}%
     {}
     {}
     {\it}
     {.}
     {.5em}
     {\thmname{#1}\thmnumber{ #2}: \thmnote{\sc #3}}
\theoremstyle{bozont-remark}
\def\factor#1.#2.{\left. \raise 2pt\hbox{$#1$} \right/\hskip -2pt\raise
  -2pt\hbox{$#2$}} 
\newlength{\swidth}
\newenvironment{enumerate-p}{
  \begin{enumerate}}
  {\setcounter{equation}{\value{enumi}}\end{enumerate}}
\definecolor{tomato}{RGB}{180,62,39}
\definecolor{forrest}{RGB}{81,133,49}
\definecolor{lighttomato}{RGB}{253,65,65}
\definecolor{lightforrest}{RGB}{145,237,87}
\definecolor{mygreen}{RGB}{40,104,69}
\definecolor{mygreen2}{RGB}{3,149,39}
\definecolor{darkolivegreen}{RGB}{102,118,75}
\definecolor{cranegreen}{RGB}{102,118,75}
\definecolor{mydarkblue}{RGB}{10,92,153}
\definecolor{myblue}{RGB}{57,222,186}
\definecolor{pinkish}{RGB}{213,83,222}
\definecolor{colD}{RGB}{213,83,222}
\definecolor{defb}{RGB}{213,83,222}
\definecolor{goldenrod}{RGB}{225,115,69}
\definecolor{mauve}{RGB}{224, 176, 255}
\definecolor{fuchsia}{RGB}{255, 0, 255}
\definecolor{lavender}{RGB}{230, 230, 250}
\definecolor{gold}{RGB}{255, 215, 0}
\definecolor{orange}{RGB}{255, 127, 0}
\definecolor{maroon}{RGB}{123, 17, 19}
\definecolor{brightmaroon}{RGB}{195, 33, 72}
\definecolor{richmaroon}{RGB}{176, 48, 96}
\definecolor{green}{RGB}{3,149,39}
\title{Birational positivity in dimension $4$}
\author{Behrouz Taji \\ (with an appendix by Fr\'ed\'eric Campana)}
\address{Behrouz Taji, The Department of Mathematics, McGill University, Montreal, Canada.}
\email{\href{mailto:behrouz.taji@matil.mcgill.ca}{behrouz.taji@mail.mcgill.ca}}
\date{\today}
\keywords{Kodaira Dimension, Varieties of Kodaira Dimension Zero, Minimal Model Theory}
\subjclass[2010]{14J35, 14E30}
\begin{document}

\begin{abstract}
In this paper we prove that for a nonsingular projective variety of dimension at most 4 and with non-negative Kodaira dimension, the Kodaira dimension of coherent subsheaves of $\Omega^p$ is bounded from above by the Kodaira dimension of the variety. This implies the finiteness of the fundamental group for such an $X$ provided that $X$ has vanishing Kodaira dimension and non-trivial holomorphic Euler characteristic. 

\end{abstract}

\maketitle

\section{introduction}

It is a classical result of Bogomolov known as Bogomolov-De Franchis-Castelnuovo inequality that for projective varieties, Kodaira dimension of rank one subsheaves of $\Omega^p$ is bounded form above by $p$. In \cite{Ca95} Campana has proved that assuming some standard conjectures for non-uniruled varieties, the Kodaira dimension of such subsheaves admits another upper bound, namely the Kodaira dimension of the variety (See Theorem~\ref{campana}).

Throughout this paper we will refer to the conjectures of the minimal model program. See \cite{KM98} for the basic definitions and background. Here we will only state the ones that we need. 

\begin{conjecture}(The minimal model conjecture for nonsingular varieties) Let $X$ be a nonsingular projective variety over Z. If $K_X$ is pseudo-effective/ $Z$, then $X/Z$ has a minimal model. Otherwise it has a Mori fiber space/ $Z$.

\end{conjecture}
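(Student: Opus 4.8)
Strictly speaking the statement here is a \emph{conjecture} --- the existence of minimal models and Mori fibre spaces in full generality is a central open problem --- so what follows is not a proof but the standard strategy by which one attacks it, namely running the minimal model program (MMP) relative to $Z$. The plan is to start with the given $f\colon X \to Z$ and produce a sequence of birational modifications, each preserving $\mathbb{Q}$-factoriality and terminal singularities and each decreasing the complexity of the relative canonical class, until one reaches an output of the required type.

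First I would invoke the Cone Theorem to write the relative cone of curves $\overline{NE}(X/Z)$ as the sum of its $K_X$-nonnegative part and countably many $K_X$-negative extremal rays, and then apply the Contraction Theorem to contract a chosen $K_X$-negative extremal ray by a morphism $\varphi\colon X \to Y$ over $Z$ with connected fibres and relative Picard number one. Three cases arise. If $\dim Y < \dim X$, the contraction is of fibre type and exhibits a Mori fibre space over $Z$, which is precisely the output demanded when $K_X$ is not pseudo-effective over $Z$. If $\varphi$ is birational and contracts a divisor, one replaces $X$ by $Y$ and repeats. If $\varphi$ is small, the singularities of $Y$ are too severe to continue directly, and one must instead replace $X$ by its flip $X^+$ over $Y$.

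The two deep inputs are therefore (i) the \emph{existence of flips}, so that the small-contraction step can always be completed, and (ii) the \emph{termination of flips}, so that the resulting sequence halts. Granting these, the dichotomy on pseudo-effectivity drives the two conclusions. If $K_X$ is pseudo-effective over $Z$, no $K_X$-negative fibre-type contraction can occur, so every step is birational; since each divisorial contraction drops the relative Picard number and each flip strictly decreases a suitable termination invariant, the program ends at a model $X'/Z$ with $K_{X'}$ nef, i.e.\ a minimal model. If $K_X$ is not pseudo-effective over $Z$, the program can never terminate with a nef canonical class, so it must eventually reach a fibre-type contraction, yielding a Mori fibre space.

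The hard part is unquestionably step (ii): termination of flips is established only in low dimensions and in special settings --- for instance via the work of Birkar--Cascini--Hacon--McKernan on the existence of flips and on termination with scaling --- and it is exactly this gap that forces the statement to be recorded as a conjecture rather than a theorem. For this reason the body of the paper will treat it as a hypothesis to be invoked (in the range $\dim X \le 4$, where the cases actually needed are available) rather than as something to be proved here.
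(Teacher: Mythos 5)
You are right that this statement is a conjecture: the paper offers no proof of it and simply invokes it as a hypothesis (in the form of the ``good minimal model conjecture'') in Theorems \ref{campana} and \ref{general}, so there is nothing in the paper to compare your argument against. Your sketch of the relative MMP --- cone and contraction theorems, the trichotomy of fibre-type, divisorial, and small contractions, and the reduction to existence and termination of flips --- is the standard strategy and correctly locates termination as the open gap; your reading of how the paper uses the statement is accurate.
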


Remember that by $K_X$ pseudo-effective/$Z$, we mean $K_X$ can be realized as limit of effective divisors in the relative Neron-Severi space $N^1(X/Z)$.

\begin{conjecture}(The abundance conjecture for minimal models with terminal singularities) Let $X/Z$ be a normal projective variety with terminal singularities and $\mathbb{Q}$-Cartier canonical divisor . If $K_X$ is nef/ $Z$ then it is semi-ample/ $Z$, i.e. it is pull back of a divisor that is ample$/Z$.
\end{conjecture}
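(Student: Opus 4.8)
The statement is the relative form of the abundance conjecture, one of the deepest open problems in birational geometry, so rather than a complete argument I describe the standard strategy for attacking it and indicate where it is available unconditionally. The plan is to reduce semiampleness of $K_X$ over $Z$ to the equality of the two natural measures of positivity attached to a nef divisor: the relative Kodaira dimension $\kappa(X/Z)$ and the relative numerical dimension $\nu(X/Z)$. Once one knows the numerical abundance $\kappa(X/Z)=\nu(X/Z)$ together with the non-vanishing input $\kappa(X/Z)\ge 0$, semiampleness follows from the relative base-point-free theorem of Kawamata--Shokurov applied along the relative Iitaka fibration. Thus the proof splits into a non-vanishing step and a numerical-abundance step, organized by induction on the relative dimension.

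First I would dispose of the two extreme regimes. When $\kappa(X/Z)=\dim(X/Z)$, i.e.\ $K_X$ is big over $Z$, the base-point-free theorem yields semiampleness directly. When $\kappa(X/Z)=0$ the heart of the matter is to upgrade numerical triviality to torsion, namely to prove that a nef canonical divisor with $\nu(X/Z)=0$ satisfies $K_X\sim_{\Q,Z}0$; this is where the arithmetic of numerically trivial line bundles on terminal varieties enters. For the intermediate range $0<\kappa(X/Z)<\dim(X/Z)$ one passes to the relative Iitaka fibration $f\colon X\ratmap Y$, whose general fibers $F$ satisfy $\kappa(F)=0$. Applying the $\kappa=0$ case to the fibers and then descending along $f$ by means of the Fujino--Mori canonical bundle formula expresses $K_X$, up to the pullback of a divisor on $Y$ and a boundary term, in a shape to which the induction hypothesis in strictly smaller relative dimension applies.

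The main obstacle is precisely the numerical-abundance step together with the non-vanishing input: establishing $\nu(X/Z)=\kappa(X/Z)$, and even the bare inequality $\kappa(X/Z)\ge 0$, is what is genuinely open and what keeps abundance a conjecture rather than a theorem. These inputs are available unconditionally when $\dim X\le 3$, where the relative statement reduces to the known absolute three-dimensional theory, and in several four-dimensional situations; this is the range in which the present paper can legitimately invoke the statement. In arbitrary dimension the passage from the numerical regime to the effective or torsion regime—exactly the crux of the $\kappa=0$ case and of numerical abundance—is the real difficulty, and no reduction of it to the low-dimensional theory is presently known.
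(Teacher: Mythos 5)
This item is stated in the paper as a \emph{conjecture} (the relative abundance conjecture for terminal minimal models), and the paper offers no proof of it: it is invoked purely as a hypothesis in Theorems 1.3, 1.5 and Lemma 3.4, with the known low-dimensional cases (dimension at most $3$) cited from the literature. So there is no proof in the paper to compare yours against. Your write-up correctly recognizes this and is candid that it is a strategy survey rather than an argument; as such it contains no error, but it also establishes nothing. The outline you give --- splitting into non-vanishing plus numerical abundance ($\kappa = \nu$), handling the big and $\kappa=0$ extremes separately, and inducting through the relative Iitaka fibration via the canonical bundle formula --- is a fair description of the standard approach and of why the $\kappa=0$ (numerically trivial to torsion) step is the genuine obstruction. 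The one point worth flagging is that for the purposes of this paper you do not need any of this machinery: the paper only ever uses the conjecture in relative dimension $n-m\leq 3$ over the base, where abundance is a theorem (Miyaoka, Kawamata, Koll\'ar et al.), which is exactly why Theorem 1.5 is unconditional in dimension $4$. If you wanted to turn your sketch into something checkable, the honest deliverable would be a precise reduction of the relative statement over $Z$ to the absolute statement for the general fiber, which is the only form in which the paper actually consumes it.
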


These two conjectures put together is sometimes referred to as the good minimal model conjecture.

For nonsingular projective varieties, Campana has introduced in \cite{Ca95} a new and a more general notion of Kodaira dimension defined by 
$$
  \kappa^+ \bigl(X \bigr) := \max \left\{ \kappa \bigl( \det \sF \bigr)
    \, \bigl| \, \sF \text{ is a coherent subsheaf of $\Omega^p_{X}$,
      for some $p$} \right\}
  $$

and conjectured that $\kappa=\kappa ^+$ when $\kappa \geq 0$. He proves that the conjectured equality holds assuming the good minimal model conjecture:

\begin{thm}[\protect{Equality of $\kappa$ and $\kappa^+$ when $\kappa \geq 0$,  cf.~\cite[Prop. 3.10]{Ca95}}]\label{campana} Let X be a nonsingular projective variety in dimension $n$ with non-negative Kodaira dimension. If the good minimal model conjecture holds for nonsingular projective varieties  of dimension up to $n$ and with vanishing Kodaira dimension, then $\kappa(X)=\kappa^+(X)$.

\end{thm}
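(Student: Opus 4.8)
The inequality $\kappa(X)\le\kappa^+(X)$ is immediate: with $n=\dim X$ the sheaf $\Omega^n_X$ is itself a rank-one subsheaf of $\Omega^n_X$ with $\det\Omega^n_X=K_X$, so $\kappa^+(X)\ge\kappa(K_X)=\kappa(X)$. The whole content is therefore the reverse bound, namely that every coherent subsheaf $\sF\subseteq\Omega^p_X$ satisfies $\kappa(\det\sF)\le\kappa(X)$. Since $\kappa^+$ is a birational invariant among smooth projective varieties, I would first replace $X$ by a smooth model on which the Iitaka fibration is a morphism $\phi\colon X\to Z$ with $\dim Z=\kappa(X)$ and general fibre $F$ smooth with $\kappa(F)=0$; this last property of the Iitaka fibration is unconditional. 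The strategy is to split the positivity of $\det\sF$ into a horizontal part, bounded by $\dim Z=\kappa(X)$, and a vertical part which must vanish because $\kappa(F)=0$.

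For the split I would invoke Iitaka's easy addition theorem, which for any rank-one sheaf $\sM$ and any fibration $\phi\colon X\to Z$ with general fibre $F$ gives
\[
  \kappa(X,\sM)\ \le\ \dim Z+\kappa\bigl(F,\sM|_F\bigr).
\]
Applied to $\sM=\det\sF$, this reduces the theorem to the fibrewise estimate $\kappa\bigl(F,\det\sF|_F\bigr)\le 0$. Here $\sF|_F$ is a subsheaf of $\Omega^p_X|_F$ rather than of $\Omega^p_F$, so I must also account for the conormal directions: the general fibre sits in an exact sequence $0\to\sO_F^{\oplus k}\to\Omega^1_X|_F\to\Omega^1_F\to 0$ with $k=\dim Z$, exhibiting $\Omega^1_X|_F$ as an extension of $\Omega^1_F$ by a trivial bundle. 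Thus the heart of the matter is a statement about $F$ alone: \emph{for a smooth projective variety $F$ with $\kappa(F)=0$, every subsheaf of $\Omega^p_X|_F$ has determinant of Kodaira dimension at most $0$}.

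To establish this I would use that $\kappa(F)=0$ together with the good minimal model conjecture in dimension $\dim F=n-\kappa(X)\le n$ — this is precisely where the hypothesis, and the bound ``up to $n$'', is used. By the minimal model and abundance conjectures, $F$ has a minimal model $F^*$ with $K_{F^*}$ semi-ample; as $\kappa(F^*)=0$ this forces $K_{F^*}\equiv 0$. Because $F$ has non-negative Kodaira dimension it is not uniruled, so by Miyaoka's generic semipositivity theorem the reflexive cotangent sheaf of $F^*$ is generically nef, and since its first Chern class is $K_{F^*}\equiv 0$ it is semistable of slope zero with respect to a suitable polarization. The trivial bundle $\sO_{F^*}^{\oplus k}$ is semistable of slope zero as well, and an extension of slope-zero semistable sheaves is again semistable of slope zero; hence the transform of $\Omega^1_X|_F$ on $F^*$, and therefore each of its exterior powers $\Omega^p_X|_F$ (we are in characteristic zero), is semistable of slope zero. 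Consequently any subsheaf has non-positive slope, so its determinant has non-positive degree against the polarization. A rank-one sheaf of negative degree is not pseudo-effective and so has Kodaira dimension $-\infty$, while one of degree zero admits at most the trivial section, since an effective divisor meeting an ample class in degree zero is empty; in either case the Kodaira dimension is at most $0$. Combining the fibrewise bound with easy addition yields $\kappa(\det\sF)\le\dim Z=\kappa(X)$, which is the desired inequality.

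The main obstacle I anticipate is the passage between the smooth fibre $F$ and its minimal model $F^*$, which is in general singular with terminal singularities: one must transport the subsheaf $\sF|_F$ and the conormal sequence across the birational contraction using reflexive differentials $\Omega^{[\bullet]}_{F^*}$, which requires an extension/comparison result for reflexive forms, and one must have Miyaoka semipositivity and the semistability formalism available on a terminal variety with numerically trivial canonical class. A secondary technical point, needed at the very start, is the birational invariance of $\kappa^+$ together with the compatible behaviour of $\sF$ and $\det\sF$ under the modifications that turn the Iitaka fibration into a morphism; once these functoriality statements are in hand, the easy-addition reduction and the slope argument combine to give $\kappa(X)=\kappa^+(X)$.
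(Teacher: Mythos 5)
Your argument is correct in outline, but it is a genuinely different route from the one in this paper --- indeed the paper never proves Theorem \ref{campana} in full: it proves only the case $\kappa(X)=0$ (Proposition 3.1, where $Z$ is a point and your fibrewise claim \emph{is} the whole statement), cites \cite[Prop.~3.10]{Ca95} for the general case, and then reproves and strengthens the result via Proposition \ref{proposition}. Your decomposition is the classical one: fibre by the Iitaka map of $K_X$, apply easy addition to $\det\sF$, and reduce to showing that on the general fibre $F$ (with $\kappa(F)=0$ and $\dim F=n-\kappa(X)\le n$, so the hypothesis applies) every subsheaf of $\Omega^p_X|_F$ has determinant of Kodaira dimension $\le 0$, which you obtain from the conormal extension, a good minimal model $F^*$ with $K_{F^*}\equiv 0$, and Miyaoka-type generic nefness giving slope-zero semistability of extensions and exterior powers on general complete-intersection curves. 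The technical debts you flag (transporting the subsheaf and the conormal sequence to the terminal model $F^*$, Miyaoka on a singular variety) are discharged exactly as in Proposition 3.1: work on general complete-intersection curves of $F^*$, which avoid both the singular locus and the non-isomorphism locus of $F\dashrightarrow F^*$, and then use that an effective divisor of non-positive degree on such a covering family forces $\kappa\le 0$. The paper's own mechanism (Proposition \ref{proposition}) instead fibres by the Iitaka map of $K_X+L$, runs a \emph{relative} MMP of $X$ over that base, descends $K_Y$ and $L$ to $\mathbb{Q}$-Cartier divisors $G$ and $\wtilde L_1$ on a flattened base via Lemmas \ref{lemma1} and \ref{lemma2}, and deduces that $G$ is big from the pseudo-effectivity of $rK_X-L$ (Theorem \ref{pe}) together with the identity $(r+1)G=(rG-\wtilde L_1)+(G+\wtilde L_1)$. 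What that buys is that minimal models and abundance are needed only in dimension $n-m$ with $m\le\kappa(K_X+L)$, which is what yields the unconditional results in dimensions $4$ and $5$; your route needs the good minimal model conjecture in dimension $n-\kappa(X)$, which equals $n$ when $\kappa(X)=0$, so it proves Theorem \ref{campana} as stated but cannot be sharpened to Theorem \ref{general}.
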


This in particular refines Bogomolov's inequality when the Kodaira dimension is relatively small, for example when $\kappa(X)=0$ and $L\subseteq \Omega^p_X$, then $\kappa(L)\leq 0$. 

Note that when $c_1=0$ then we have $\kappa=\kappa^+$ by Bochner's vanishing coupled with Yau's solution \cite{Yau77} to the Calabi's conjecture.

By Theorem 1.3, $\kappa(X)$ and $\kappa^+(X)$ coincide for non-uniruled threefolds as a consequence of the minimal model program or MMP for short (See for example \cite{Ko92}). We prove Campana's conjecture in dimension four and for varieties with positive Kodaira dimension in dimension five:

\begin{thm}\label{equality} Let $X$ be a nonsingular projective variety. 

(i). If dimension of $X$ is at most 4 and $\kappa(X) \geq 0$, then $\kappa=\kappa^+$.

(ii). If dimension of $X$ is 5 and $\kappa(X) \geq 1$, then $\kappa=\kappa^+$.

\end{thm}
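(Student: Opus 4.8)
The plan is to deduce both parts from Theorem~\ref{campana} by verifying, in the relevant range of dimensions, its only hypothesis: the good minimal model conjecture for varieties of Kodaira dimension zero. Since $\det \Omega^n_X = \omega_X$ realizes $\kappa(X)$ as one of the competing quantities, one always has $\kappa^+(X) \ge \kappa(X)$, so the content is the reverse bound $\kappa(\det \sF) \le \kappa(X)$ for every coherent $\sF \subseteq \Omega^p_X$. The first thing I would record is that the dimension which really enters Campana's argument is not the ambient dimension $n$ but the dimension of the general fibre of the Iitaka fibration, $n - \kappa(X)$; this is what allows the fivefold case (ii) to be reached at all.

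To make this precise, let $f \colon X' \to Y$ be a nonsingular model of the Iitaka fibration, so that $\dim Y = \kappa(X)$ and the general fibre $F$ has $\dim F = n - \kappa(X)$ and $\kappa(F) = 0$. Inspecting the proof of Theorem~\ref{campana}, the relative cotangent sequence for $f$ filters $\Omega^p_{X'}$ with graded pieces assembled from $f^* \Omega^a_Y$ and the fibrewise forms $\Omega^b_F$. The horizontal contributions descend to $Y$, where Bogomolov's inequality bounds the Kodaira dimension of the determinant by the form degree $a$, which is at most $\dim Y = \kappa(X)$; the purely vertical contributions are governed by Theorem~\ref{campana} applied on the fibres, where it asserts $\kappa^+(F) = \kappa(F) = 0$. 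It is only for these fibres that the good minimal model conjecture is invoked. Hence $\kappa = \kappa^+$ for $X$ follows as soon as the good minimal model conjecture is available for varieties of Kodaira dimension zero and dimension $n - \kappa(X)$.

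For part (i), $n \le 4$ and $\kappa(X) \ge 0$ give $n - \kappa(X) \le 4$, and for part (ii), $n = 5$ and $\kappa(X) \ge 1$ give $n - \kappa(X) \le 4$; so in both cases it suffices to establish the good minimal model conjecture for varieties of Kodaira dimension zero in dimension at most $4$. In dimension at most $3$ this is classical, minimal models existing and abundance holding by the work of Mori, Kawamata, Miyaoka and Koll\'ar, so the whole argument funnels into the single case of a nonsingular projective fourfold $W$ with $\kappa(W) = 0$ (which occurs as $X$ itself when $\kappa(X) = 0$ in (i), and as the general Iitaka fibre when $\kappa(X) = 1$ in (ii)). Since $W$ has non-negative Kodaira dimension it is non-uniruled, so $K_W$ is pseudo-effective and the minimal model program, which terminates in dimension four, produces a terminal minimal model $W'$ with $K_{W'}$ nef and $\kappa(W') = 0$.

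The good minimal model conjecture for $W$ now reduces to the abundance statement that $K_{W'}$ is semi-ample, and because $\kappa(W') = 0$ this is equivalent to $K_{W'}$ being torsion in $\Pic(W')$, hence to the numerical triviality $K_{W'} \equiv 0$, since a numerically trivial nef canonical class on a terminal variety is torsion. The main obstacle is therefore to show that the numerical dimension of a minimal fourfold of Kodaira dimension zero vanishes, i.e. $\nu(K_{W'}) = 0$; this is the genuinely hard, abundance-type input, and it is exactly here that the restriction to dimension four is indispensable. I expect to clear it by appealing to the known abundance results for fourfolds in the Kodaira-dimension-zero range, phrased through Nakayama's numerical-dimension theory in which $\nu = 0$ forces $K_{W'}$ to be torsion; once $K_{W'}$ is semi-ample, Theorem~\ref{campana} closes both (i) and (ii).
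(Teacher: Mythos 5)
Your reduction funnels everything into the statement that a minimal terminal fourfold $W'$ with $\kappa(W')=0$ has torsion canonical class, and at that point you write that you ``expect to clear it by appealing to the known abundance results for fourfolds in the Kodaira-dimension-zero range.'' This is the gap: no such results exist. What is known (Nakayama, and the developments alluded to in Remark 3.2 of the paper) is the implication $\nu(K_{W'})=0\Rightarrow K_{W'}$ torsion; the implication $\kappa(K_{W'})=0\Rightarrow\nu(K_{W'})=0$ for a nef canonical class is precisely the open part of the abundance conjecture in dimension four. So your argument silently assumes the very conjecture the paper is written to avoid. The paper even flags this in Remark 3.10: running the Iitaka fibration of $K_X$ and invoking abundance on its fibres, as you do, is Campana's Prop.~10.9 and only covers $\kappa(X)\ge\dim X-3$, i.e.\ it misses exactly the new cases $\kappa=0$, $n=4$ and $\kappa=1$, $n=5$, where the Iitaka fibres are fourfolds of Kodaira dimension zero.

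The paper's route is genuinely different and is designed so that abundance is only ever needed in dimension $\le 3$. Fix $\sF\subseteq\Omega^p_X$ with $L=\det\sF$ and suppose for contradiction that $\kappa(L)>\kappa(X)\ge m-1$, so $\kappa(K_X+L)\ge m$. One then fibres $X$ over the Iitaka base $Z$ of $K_X+L$ (not of $K_X$): since $\dim Z\ge m$, the general fibre has dimension $\le n-m$ and Kodaira dimension zero, so the good minimal model conjecture is only invoked in dimension $n-m$ (which is $\le 3$ in both cases of the theorem, where it is known). The other essential input is the Campana--Peternell pseudo-effectivity of $rK_X-L$ coming from $K_X^*\otimes\sF\subseteq\wedge^{n-p}\sT_X$ and non-uniruledness. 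Running a relative MMP over $Z$, flattening, and descending both $K_Y$ and $L$ to $\bQ$-Cartier divisors $G$ and $\wtilde L_1$ on a modification of $Z$, the identity $(r+1)G=(rG-\wtilde L_1)+(G+\wtilde L_1)$ exhibits $G$ as big, whence $\kappa(L)=\kappa(\wtilde L_1)\le\kappa(G)=\kappa(K_X)$, a contradiction. If you want to salvage your write-up, you should replace the Iitaka fibration of $K_X$ by that of $K_X+L$ and incorporate the pseudo-effectivity of $rK_X-L$; as it stands, the proposal does not prove the theorem.
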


\
Furthermore one can show that the cotangent bundle of such varieties is \emph{birationally stable} (See the appendix~\ref{stability}). 

\

Theorem 1.4 is a consequence of a much more general result that we obtain in this paper:

\begin{thm}\label{general} Let X be a nonsingular projective variety of dimension $n$. Assume that the good minimal model conjecture holds for terminal projective varieties with zero Kodaira dimension up to dimension $n-m$, where $m>0$. If $\kappa(X)\geqslant m-1$ then $\kappa=\kappa^+$.

\end{thm}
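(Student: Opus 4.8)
The plan is to bound $\kappa^+(X)$ above by $\kappa(X)$, the reverse inequality $\kappa^+(X)\ge\kappa(X)$ being immediate upon taking $\sF=\Omega^n_X=\omega_X$ with $n=\dim X$. Since $\kappa$ and $\kappa^+$ are both birational invariants of smooth projective varieties, I would first replace $X$ by a smooth model on which the Iitaka fibration $f\colon X\to Y$ of $K_X$ is a morphism. Then $\dim Y=\kappa(X)=:k\ge m-1$, and the general fibre $F$ is smooth with $\kappa(F)=0$ and $\dim F=n-k\le n-m+1$. Given any coherent $\sF\subseteq\Omega^p_X$ with $\kappa(\det\sF)=\kappa^+(X)$, Iitaka's easy addition along $f$ gives $\kappa(X,\det\sF)\le\dim Y+\kappa\bigl(F,\det(\sF|_F)\bigr)=k+\kappa\bigl(F,\det(\sF|_F)\bigr)$, so the problem reduces to controlling the restriction to a general fibre.

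Over a general fibre the conormal sequence degenerates, $N^*_{F/X}\cong\O_F^{\oplus k}$, so $\Omega^p_X|_F$ carries a filtration with graded pieces $(\Omega^{p-i}_F)^{\oplus\binom{k}{i}}$. Hence $\det(\sF|_F)$ is, up to this filtration, a product of determinants of subsheaves of finite direct sums of the sheaves $\Omega^j_F$, and by the robustness of the invariant $\kappa^+$ under such sum and tensor constructions (as set up in \cite{Ca95}) its Kodaira dimension is at most $\kappa^+(F)$. It therefore suffices to prove $\kappa^+(F)=\kappa(F)=0$ for a smooth projective $F$ with $\kappa(F)=0$ and $\dim F\le n-m+1$. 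When $k\ge m$, that is $\dim F\le n-m$, this is immediate from Theorem~\ref{campana} applied to $F$, since the good minimal model conjecture is then available in dimension $\dim F$.

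The entire difficulty thus concentrates in the boundary case $k=m-1$, where $\dim F=n-m+1$ and Theorem~\ref{campana} is exactly one dimension out of reach; this is the main obstacle. Here I would argue by contradiction: if $\kappa^+(F)\ge1$, there is a saturated $\sG\subseteq\Omega^q_F$ with $\kappa(\det\sG)\ge1$, and the Bogomolov--Campana theory of such sheaves produces a fibration $h\colon F\to W$ (after a further modification of $F$) with $\dim W=\kappa(\det\sG)\ge1$ through which $\det\sG$ factors via $h^*\omega_W$, forcing $\kappa(W)\ge\dim W$ and hence $W$ of general type. Its general fibre $F_w$ satisfies $\dim F_w=\dim F-\dim W\le n-m$, so the good minimal model hypothesis applies to $F_w$ (after reducing, through its own Iitaka fibration, to the $\kappa=0$ situation in dimension $\le n-m$). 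Subadditivity of the Kodaira dimension for $h$ then yields $\kappa(F)\ge\kappa(W)+\kappa(F_w)\ge1$, contradicting $\kappa(F)=0$; therefore $\kappa^+(F)=0$.

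Combining both cases gives $\kappa(X,\det\sF)\le k=\kappa(X)$ for every $\sF$, hence $\kappa^+(X)\le\kappa(X)$ and equality. I expect the real effort to lie entirely in the boundary case: producing the fibration $h$ with base of the correct dimension and verifying $\kappa(W)\ge\dim W$ (so that either Viehweg's unconditional subadditivity over general-type bases, or the good minimal model hypothesis on the now $(n-m)$-dimensional fibres $F_w$, supplies the needed inequality), together with the dimension bookkeeping that ensures every variety to which the good minimal model conjecture is applied has dimension at most $n-m$. By contrast, the reduction via easy addition and the passage from $\Omega^p_X|_F$ to direct sums of $\Omega^j_F$ on the fibre are routine.
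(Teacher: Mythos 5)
Your reduction along the Iitaka fibration of $K_X$ works in outline when $\kappa(X)\ge m$ (there the fibres have dimension $\le n-m$ and Theorem~\ref{campana} applies to them), but the case that actually matters, $\kappa(X)=m-1$, rests on an assertion that is not available: you claim that a saturated $\sG\subseteq\Omega^q_F$ with $\kappa(\det\sG)\ge 1$ yields a fibration $h\colon F\to W$ with $\dim W=\kappa(\det\sG)$ and $\kappa(W)\ge\dim W$. The Bogomolov--Castelnuovo--de Franchis mechanism produces a general-type base only in the extremal situation where $\sG$ has rank one and $\kappa(\sG)=q$ is maximal; for intermediate values $0<\kappa(\det\sG)<q$, or for higher-rank $\sG$, no such statement is known. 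Indeed, if it held unconditionally, your contradiction argument (Viehweg's $C^+_{n,m}$ over a general-type base plus $\kappa(F_w)\ge 0$) would prove $\kappa=\kappa^+$ for every non-uniruled variety with no minimal model hypothesis whatsoever, which is precisely the open conjecture. So the boundary case, which you correctly identify as the entire difficulty, is not closed. A secondary issue: after filtering $\Omega^p_X|_F$ by the $(\Omega^{p-i}_F)^{\oplus\binom{k}{i}}$, the line bundle $\det(\sF|_F)$ is a tensor product of determinants of the graded pieces, some of which may have $\kappa=-\infty$, and Kodaira dimension is not subadditive under tensor product in that situation; moreover subsheaves of direct sums of the $\Omega^j_F$ are controlled by the invariant $\omega(F)$ of the appendix rather than by $\kappa^+(F)$ itself, so this step needs more care than ``robustness under sums.''

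The paper avoids the boundary case entirely by fibring over the Iitaka base of $K_X+L$ rather than of $K_X$: arguing by contradiction one may assume $\kappa(L)>\kappa(X)\ge m-1$, hence $\kappa(K_X+L)\ge\kappa(L)\ge m$, so the general fibre of $i_{K_X+L}$ has dimension $\le n-m$ and the good minimal model hypothesis applies to it (Proposition~\ref{proposition}). The engine of the proof is then the Campana--Peternell pseudo-effectivity of $rK_X-L$ (Theorem~\ref{pe}), combined with a relative minimal model of $X$ over $Z$, abundance on the fibres and flattening (Lemma~\ref{lemma2}) to write $\mu^*(K_Y)=\wtilde\psi^*(G)$, and finally the identity $(r+1)G=(rG-\wtilde L_1)+(G+\wtilde L_1)$ to show $G$ is big, whence $\kappa(L)\le\kappa(G)=\kappa(K_X)$. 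Your proposal never invokes the pseudo-effectivity theorem, and without it there is no source of positivity for the base; I would rework the argument around the fibration attached to $K_X+L$ and Theorem~\ref{pe}.
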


An important corollary of~\ref{equality} is the finiteness of the fundamental group of 4-dimensional varieties with vanishing Kodaira dimension and non-zero holomorphic Euler characteristic (See~\ref{finiteness} below) which follows from a remarkable result of Campana:

\begin{thm}[\protect{Finiteness of the fundamental groups,  cf.~\cite[Cor. 5.3]{Ca95}}]\label{observation} Let X be a nonsingular projective variety. If $\kappa^+(X)=0$ and $\chi(X,\sO_X) \neq 0$, then $\pi_1(X)$ is finite. 

\end{thm}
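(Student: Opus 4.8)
The plan is to combine the constraint $\kappa^+(X)=0$ with the structure theory of varieties of Kodaira dimension zero. Since $\det\Omega^n_X=\omega_X$ is itself a subsheaf of $\Omega^n_X$, the hypothesis gives $\kappa(X)\leq\kappa^+(X)=0$. I would first dispose of the case $\kappa(X)=-\infty$ by passing to the maximal rationally connected fibration: resolving it to a morphism $f\colon X'\to Z$ with rationally connected general fibre, the fibres are simply connected and satisfy $R^jf_*\sO_{X'}=0$ for $j>0$, so $\pi_1(X)\cong\pi_1(Z)$ and $\chi(\sO_X)=\chi(\sO_Z)$. Because every holomorphic form on $Z$ pulls back to $X'$, one has $\kappa^+(Z)\leq\kappa^+(X)=0$, while $Z$ is non-uniruled and hence $\kappa(Z)\geq 0$; thus $\kappa(Z)=0$ and the hypotheses are inherited by $Z$. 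So I may assume $\kappa(X)=0$ from the outset.

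For $\kappa(X)=0$ I would invoke the good minimal model conjecture (available in the dimensions relevant to this paper): $X$ is birational to a minimal model $X_{\min}$ with terminal singularities and $K_{X_{\min}}$ torsion, and the Beauville--Bogomolov decomposition then provides a finite \'etale cover $Y\to X_{\min}$ isomorphic to a product
\[
  T\times\prod_i V_i\times\prod_j W_j,
\]
where $T$ is an abelian variety and the $V_i$, $W_j$ are Calabi--Yau and hyperkähler manifolds respectively. Since terminal singularities are rational, $\chi(\sO)$ is a birational invariant here, so $\chi(\sO_Y)=\deg(Y/X_{\min})\cdot\chi(\sO_X)\neq 0$.

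The decisive step is to read off the torus factor. Because $\chi$ is multiplicative on products and $\chi(\sO_T)=0$ as soon as $\dim T\geq 1$, the non-vanishing $\chi(\sO_Y)\neq 0$ forces $T$ to be a point. Hence $Y$ is a product of simply connected manifolds (Calabi--Yau and hyperkähler factors are simply connected), so $\pi_1(Y)=1$; as $Y\to X_{\min}$ is finite \'etale this makes $\pi_1(X_{\min})$ finite, and since $\pi_1$ is a birational invariant of smooth projective varieties and the exceptional fibres of a resolution of the terminal (hence rational) variety $X_{\min}$ are rationally chain connected, we get $\pi_1(X)\cong\pi_1(X_{\min})$, which is finite.

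The main obstacle is the input I have been quoting as available: the existence of the good minimal model together with the Beauville--Bogomolov splitting for $K$-trivial varieties. In full generality these are open, which is why the theorem is genuinely hard; in the dimensions treated in this paper they are theorems, so the argument above is complete there. A secondary technical point to verify with care is the invariance of both $\chi(\sO_{-})$ and $\pi_1$ under the MRC reduction and under passage to the terminal minimal model, i.e.\ that terminal (hence rational) singularities leave $\chi(\sO)$ unchanged and that the rationally chain connected exceptional fibres of a resolution leave $\pi_1$ unchanged. One could alternatively bypass the decomposition by a generic-vanishing argument, using that $\kappa(X)=0$ with maximal Albanese dimension forces $X$ to be birational to an abelian variety (Chen--Hacon) and hence $\chi(\sO_X)=0$; but then the case of positive-dimensional Albanese fibres becomes the crux and again requires the structure theory above.
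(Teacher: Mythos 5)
There is no proof to compare against here: the paper quotes this statement verbatim from \cite[Cor.~5.3]{Ca95}, where it is an \emph{unconditional} theorem valid in every dimension. Your argument does not establish that statement. It is conditional on the good minimal model conjecture and on a Beauville--Bogomolov decomposition for terminal varieties with torsion canonical class, and the fallback that ``in the dimensions treated in this paper these are theorems'' is not available: the existence of good minimal models for fourfolds with $\kappa=0$ is exactly what is \emph{not} known, and avoiding that assumption is the entire point of Theorems~\ref{equality} and~\ref{general}. (Your reduction of the case $\kappa(X)=-\infty$ also quietly invokes the nonvanishing conjecture when you assert $\kappa(Z)\geq 0$ for the non-uniruled MRC base $Z$.) The logical role of Theorem~\ref{observation} in this paper is the reverse of what you set up: it is the unconditional input, and the paper's contribution is to supply its hypothesis $\kappa^+(X)=0$ from $\kappa(X)=0$ in dimension at most $4$, which is how Theorem~\ref{finiteness} is deduced.

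The conceptual gap is that you use $\kappa^+(X)=0$ only to extract $\kappa(X)\leq 0$ and then lean on structure theory for $K$-trivial varieties. But the hypothesis is far stronger than $\kappa(X)=0$: it bounds $\kappa(\det\sF)$ for \emph{every} coherent subsheaf $\sF\subseteq\Omega^p_X$ and every $p$, and this strength is what permits an MMP-free proof. Campana's argument proceeds via the Albanese map: if $\Alb$ of $X$ (or of a finite \'etale cover, which inherits $\kappa^+=0$) failed to be surjective with connected fibres, Ueno's theorem on subvarieties of abelian varieties would produce a rank-one subsheaf of some $\Omega^p$ of positive Kodaira dimension, a contradiction; hence $\pi_1(X)$ is almost abelian, and if it were infinite a finite \'etale cover $X'$ would have $q(X')>0$ together with a surjective connected Albanese fibration, which combined with generic vanishing in the style of Green--Lazarsfeld forces $\chi(\sO_{X'})=0$ and hence $\chi(\sO_X)=0$. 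None of this requires minimal models or the decomposition theorem. Even granting your inputs, note additionally that the singular decomposition theorem yields a quasi-\'etale rather than \'etale cover, so both the multiplicativity of $\chi(\sO)$ and the finiteness of $\pi_1$ of the factors would need separate justification.
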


\begin{thm}\label{finiteness} Let $X$ be a nonsingular projective variety of dimension at most $4$. Assume $\kappa(X)=0$ and  $\chi(X,\sO_X) \neq 0$, then $\pi_1(X)$ is finite.

\end{thm}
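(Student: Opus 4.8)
The plan is to obtain this statement as an immediate corollary of two results already in hand: the equality $\kappa = \kappa^+$ in dimension at most four (Theorem~\ref{equality}) and Campana's finiteness criterion (Theorem~\ref{observation}). The entire task reduces to checking that the hypotheses of the latter are produced by the former.

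First I would invoke Theorem~\ref{equality}(i). Since $X$ is nonsingular and projective of dimension at most $4$, and since $\kappa(X) = 0$ entails in particular $\kappa(X) \geq 0$, that theorem applies and yields
\[
  \kappa^+(X) = \kappa(X) = 0.
\]
With this established, the two hypotheses of Theorem~\ref{observation} are precisely met: $\kappa^+(X) = 0$ from the previous step, and $\chi(X, \sO_X) \neq 0$ by assumption. Campana's theorem then gives at once that $\pi_1(X)$ is finite, which is the desired conclusion.

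As for where the difficulty lies, there is essentially no obstacle in this deduction itself; it is a purely formal concatenation of the two cited results. All of the genuine mathematical content has been front-loaded into Theorem~\ref{equality} (and hence into Theorem~\ref{general} and the low-dimensional minimal model program underlying it), which is where the real work happens. The hypothesis $\chi(X, \sO_X) \neq 0$ does not enter the positivity statement at all; its sole function is to drive Campana's argument, ruling out the infinite fundamental groups that would otherwise arise (for instance from an abelian-variety factor in an \'etale cover), and this is handled entirely inside the proof of Theorem~\ref{observation}.
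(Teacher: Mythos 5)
Your proposal is correct and matches the paper exactly: the paper presents Theorem~\ref{finiteness} as an immediate corollary obtained by combining Theorem~\ref{equality}(i), which gives $\kappa^+(X)=\kappa(X)=0$, with Campana's criterion Theorem~\ref{observation}. There is nothing to add.
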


\

\subsection{Acknowledgements} The author would like to thank his advisor S. Lu for his advice, guidance and constant support. A special thanks is owed to F. Campana for his suggestions and encouragements. The author also wishes to express his gratitude to the anonymous referee for the insightful comments.

\

\section{generic semi-positivity and pseuodo-effectivity}

Let $X$ be a non-uniruled nonsingular projective variety. It is a well known result of Miyaoka, cf.~\cite{Miy87,Miy85} that $\Omega_X$ is generically semi-positive. This means that the determinant line bundle of any torsion free quotient of $\Omega_X$ has non-negative degree on curves cut out by sufficiently ample divisors. Equivalently we can characterize this important positivity result by saying that $\Omega_X$ restricted to these general curves is nef unless $X$ is uniruled. This property is sometimes called \emph{generic nefness}. Since nefness is invariant under taking symmetric powers this result automatically generalizes to $\Omega^p_X$. Using the same characteristic $p$ arguments as Miyaoka and some deep results in differential geometry, Campana and Peternell have shown that in fact such a determinant line bundle is dual to the cone of moving curves, i.e. its restriction to these curves has non-negative degree. By \cite{BDPP} this is the same as saying that it is pseudo-effective.

	\begin{thm}[\protect{Pseudo-effectivity of quotients of $\Omega^p_X$, cf.~\cite[Thm. 1.7]{CP07}}]\label{pe} Let X be a non-uniruled nonsingular projective variety and let $\sF$ be an $\sO_X$- module torsion free quotient of $\Omega^p_X$. Then $\det\sF$ is a pseudo-effective line bundle.

\end{thm}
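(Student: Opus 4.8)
The plan is to combine the duality theorem of \cite{BDPP} with Miyaoka's generic semi-positivity. By \cite{BDPP} the pseudo-effective cone $\overline{\mathrm{Eff}}(X)\subset N^1(X)_\bR$ is dual to the closed cone $\overline{\mathrm{Mov}}(X)$ of movable curves, so to prove that $\det\sF$ is pseudo-effective it is enough to show that $\det\sF\cdot\gamma\geqslant 0$ for every class $\gamma\in\overline{\mathrm{Mov}}(X)$. By continuity of the intersection pairing it suffices to treat the \emph{strongly movable} classes, which \cite{BDPP} show generate this cone: these are the classes $\gamma=\mu_*(A_1\cdots A_{n-1})$, where $\mu\colon X'\to X$ is a projective birational morphism from a nonsingular variety $X'$ and $A_1,\dots,A_{n-1}$ are very ample on $X'$. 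Equivalently, the relevant $\gamma$ are represented by general members $C=\mu(C')$ of covering families of curves on $X$, where $C'=A_1\cap\cdots\cap A_{n-1}$.

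I would then fix such a $\gamma$ and carry out a degree computation on the general complete intersection curve $C'\subset X'$. By Bertini $C'$ is a smooth curve, and since $\dim\Exc(\mu)\leqslant n-1$ it meets $\Exc(\mu)$ in at most finitely many points; hence $\nu:=\mu|_{C'}\colon C'\to C$ is a finite, birational morphism onto the movable curve $C=\mu(C')$, and $\mu^*\Omega^p_X|_{C'}\cong\nu^*\bigl(\Omega^p_X|_C\bigr)$. The decisive input is that $\Omega_X|_C$ is a \emph{nef} vector bundle: this is exactly Miyaoka's generic semi-positivity, in the form that for the non-uniruled $X$ the restriction of $\Omega_X$ to a general member of a covering family is nef. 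Because nefness is preserved under pullback by the finite map $\nu$ and under the formation of exterior powers, $\mu^*\Omega^p_X|_{C'}=\wedge^p\nu^*(\Omega_X|_C)$ is again nef.

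Finally I assemble the pieces. Writing $\sG$ for the torsion-free quotient of $\mu^*\sF$, which is itself a quotient of $\mu^*\Omega^p_X$, we have $\mu^*\det\sF=\det\sG+E$ for an effective divisor $E$, so by the projection formula
\[
\det\sF\cdot\gamma \;=\; \mu^*\det\sF\cdot(A_1\cdots A_{n-1}) \;=\; \det\sG\cdot[C'] + E\cdot[C'].
\]
The term $E\cdot[C']$ is non-negative because $[C']$ is a complete intersection of ample divisors, and $\det\sG\cdot[C']=\deg_{C'}\det(\sG|_{C'})\geqslant 0$ because, for general $C'$, the sheaf $\sG|_{C'}$ is a quotient \emph{vector bundle} of the nef bundle $\mu^*\Omega^p_X|_{C'}$, and quotients of nef bundles are nef (so their determinants have non-negative degree on a curve). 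As $\gamma$ ranges over the generators of $\overline{\mathrm{Mov}}(X)$, this shows $\det\sF$ lies in the dual cone and is therefore pseudo-effective.

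\textbf{Main obstacle.} The genuinely hard step, carrying all the geometric weight, is the nefness of $\Omega_X|_C$ for a general member of an \emph{arbitrary} covering family, rather than merely for complete intersections of a fixed polarization on $X$, where Miyaoka's original characteristic-$p$ Frobenius argument applies directly. Upgrading generic semi-positivity to all movable classes (equivalently, to images under modifications $\mu$) is precisely what makes the \cite{BDPP} duality usable, and is where the deeper bend-and-break and differential-geometric techniques enter. The remaining ingredients, namely Bertini genericity, the torsion/saturation bookkeeping for $\sF$, and stability of nefness under $\wedge^p$, finite pullback, and quotients, are routine.
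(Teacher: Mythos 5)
First, a point of comparison: the paper does not actually prove this statement. Theorem~\ref{pe} is quoted from \cite[Thm.~1.7]{CP07}, and the surrounding text of Section~2 only narrates the ingredients (Miyaoka's generic nefness on polarized complete intersection curves, the Campana--Peternell upgrade to duality with the moving cone, and the \cite{BDPP} characterization of pseudo-effectivity). So there is no in-paper argument to match yours against line by line; the relevant comparison is with \cite{CP07} itself.

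Measured as a self-contained proof, your write-up has a genuine gap, and it is exactly the one you flag at the end. The reduction via \cite{BDPP} duality to strongly movable classes $\mu_*(A_1\cdots A_{n-1})$, the projection-formula bookkeeping with the torsion of $\mu^*\sF$, and the stability of nefness under finite pullback, exterior powers and quotients are all correct and routine. But the ``decisive input'' you invoke --- that $\Omega_X$ restricted to a general member of an \emph{arbitrary} covering family (equivalently, to the image of a general complete intersection curve on an arbitrary modification $X'\to X$) is nef --- is not Miyaoka's generic semi-positivity theorem: \cite{Miy87, Miy85} give nefness only on general complete intersections of sufficiently ample divisors on $X$ itself. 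The extension to all movable classes is precisely the content of \cite[Thm.~1.7]{CP07} (which in fact establishes the weaker but sufficient assertion that determinants of torsion-free quotients pair non-negatively with every movable class, rather than nefness of the full restricted bundle), so your argument reduces the theorem to itself at the crucial point. Closing the gap would require reproducing the Campana--Peternell--Toma argument --- Harder--Narasimhan theory with respect to movable polarizations combined with Miyaoka's characteristic-$p$ Frobenius and bend-and-break technique. Your diagnosis of where the difficulty sits is accurate, but naming the obstacle is not the same as overcoming it.
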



\

\section{The Refined Kodaira dimension}

In this section we will use more or less the same ideas as Cascini \cite{Cs06} to show that $\kappa$ and $\kappa^+$ coincide for nonsingular projective varieties of dimension four with non-negative Kodaira dimension and also for varieties of dimension five with positive Kodaira dimension. The following proposition is a result of Campana, cf.~\cite{Ca95}. We include a proof for completeness.

\begin{prop} Let $X$ be a nonsingular projective variety with $\kappa(X)=0$. If $X$ has a good minimal model then $\kappa=\kappa^+$.

\end{prop}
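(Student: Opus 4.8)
The inequality $\kappa(X)\le \kappa^+(X)$ is automatic: taking $p=\dim X$ and $\sF=\Omega^{\dim X}_X=\sO_X(K_X)$ gives $\det\sF=\sO_X(K_X)$, so $\kappa^+\ge\kappa$. Hence the whole content is the reverse inequality $\kappa^+(X)\le 0$, i.e.\ that $\kappa(\det\sF)\le 0$ for every coherent subsheaf $\sF\subseteq\Omega^p_X$ and every $p$. Set $n:=\dim X$. Replacing $\sF$ by its saturation in $\Omega^p_X$ only enlarges $\det\sF$ by an effective divisor and so can only increase $\kappa(\det\sF)$; thus I may assume $\sF$ is saturated and the quotient $\sQ:=\Omega^p_X/\sF$ is torsion-free. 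Since $\kappa$ and $\kappa^+$ are birational invariants among smooth projective models, I may also replace $X$ by a smooth model resolving the rational map to the given good minimal model, so that there is an honest birational morphism $\nu\colon X\to X'$ onto the minimal model. Because $X'$ is a \emph{good} minimal model, $K_{X'}$ is semi-ample; as $\kappa(X')=\kappa(X)=0$, the morphism attached to $|mK_{X'}|$ contracts $X'$ to a point, so $mK_{X'}\sim 0$ for suitable $m$. Thus $K_{X'}$ is torsion, $\nu^*K_{X'}\equiv 0$, and since $X'$ has terminal (at worst canonical) singularities one has $K_X=\nu^*K_{X'}+E$ with $E\ge 0$ an effective $\nu$-exceptional divisor; in particular $K_X\equiv E$.

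Now the core estimate. As $\kappa(X)\ge 0$, the variety $X$ is non-uniruled, so Theorem~\ref{pe} applies to the torsion-free quotient $\sQ$ of $\Omega^p_X$ and shows that $\det\sQ$ is pseudo-effective. From the exact sequence $0\to\sF\to\Omega^p_X\to\sQ\to 0$ together with $\det\Omega^p_X=\binom{n-1}{p-1}K_X$ I obtain
$$
\det\sF \;=\; \binom{n-1}{p-1}K_X-\det\sQ \;\equiv\; \binom{n-1}{p-1}E-\det\sQ .
$$
Fix an ample divisor $H'$ on $X'$ and intersect with the nef class $(\nu^*H')^{\,n-1}$. The exceptional term vanishes by the projection formula, $E\cdot(\nu^*H')^{\,n-1}=(\nu_*E)\cdot H'^{\,n-1}=0$, while $\det\sQ\cdot(\nu^*H')^{\,n-1}\ge 0$: writing the pseudo-effective class $\det\sQ$ as a limit of effective $\bQ$-divisors $D_t$ and using $D_t\cdot(\nu^*H')^{\,n-1}=(\nu_*D_t)\cdot H'^{\,n-1}\ge 0$ gives the claim in the limit. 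Hence $\det\sF\cdot(\nu^*H')^{\,n-1}\le 0$.

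Finally I convert this into the bound $\kappa(\det\sF)\le 0$. Suppose instead $\kappa(\det\sF)\ge 1$. Then for some $m$ the linear system $|m\det\sF|$ contains two distinct members $E_1\ne E_2$, both effective. Each satisfies $E_i\cdot(\nu^*H')^{\,n-1}=m\,\det\sF\cdot(\nu^*H')^{\,n-1}\le 0$, while the projection formula gives $E_i\cdot(\nu^*H')^{\,n-1}=(\nu_*E_i)\cdot H'^{\,n-1}\ge 0$ since $\nu_*E_i$ is effective and $H'$ ample; so equality holds and $\nu_*E_i=0$, i.e.\ each $E_i$ is $\nu$-exceptional. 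Since $E_1\sim E_2$, their difference is the divisor of a rational function $f$, and this divisor is $\nu$-exceptional; pushing forward, $\nu_*(E_1-E_2)=0$ shows that $f$ has neither zeros nor poles on the projective variety $X'$, hence is constant, contradicting $E_1\ne E_2$. Therefore $\kappa(\det\sF)\le 0=\kappa(X)$, and together with the trivial inequality this yields $\kappa=\kappa^+$.

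The step I expect to demand the most care is the reduction in the first paragraph: justifying that $\kappa^+$ is genuinely unchanged on passing to a smooth model dominating $X'$ (so that a morphism $\nu$ is available and $\det\sF$ can be compared across the only-rational map to the singular minimal model), and checking that the pseudo-effectivity of Theorem~\ref{pe} is available in precisely the form used. The conceptual heart, by contrast, is clean and robust: over a model dominating a minimal model with numerically trivial canonical class, the determinant of any saturated subsheaf of $\Omega^p$ is anti-pseudo-effective modulo exceptional divisors, and exceptional divisors are rigid, which caps the Kodaira dimension at $0$.
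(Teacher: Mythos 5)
Your proof is correct. Its skeleton agrees with the paper's: pass to a smooth model dominating the good minimal model $X'$, where $K_X=\nu^*K_{X'}+E$ with $K_{X'}$ torsion and $E$ effective and exceptional, show that $\det\sF$ pairs non-positively with the movable class $(\nu^*H')^{n-1}$, and deduce $\kappa(\det\sF)\le 0$. The two arguments differ, however, in the positivity input and in how the last step is handled. The paper restricts to strict transforms $\wtilde C$ of general complete-intersection curves on the minimal model, uses the pairing $\Omega^p\otimes\Omega^{n-p}\to K$ to present $\sF^*|_{\wtilde C}$ as a quotient of $K^*_{\wtilde Y}|_{\wtilde C}\otimes\Omega^{n-p}_{\wtilde Y}|_{\wtilde C}$, and invokes Miyaoka's generic nefness of $\Omega^{n-p}$ along such curves to get $\deg(\det\sF|_{\wtilde C})\le 0$, after which it simply asserts that this forces $\kappa\le 0$. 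You instead saturate $\sF$, apply the Campana--Peternell pseudo-effectivity theorem (Theorem~\ref{pe}) to the torsion-free quotient $\sQ=\Omega^p_X/\sF$, and write $\det\sF\equiv\binom{n-1}{p-1}E-\det\sQ$ before intersecting with $(\nu^*H')^{n-1}$; this is exactly the mechanism the paper deploys later in the proof of Proposition~\ref{proposition} (pseudo-effectivity of $rK_X-L$), so your route is more uniform with the rest of the paper, at the price of invoking the stronger result of \cite{CP07} where Miyaoka's older theorem suffices. A genuine added value of your write-up is that you prove, rather than assert, the final implication: if $\kappa(\det\sF)\ge 1$, two distinct linearly equivalent effective members would both have to be $\nu$-exceptional, and their difference would be the divisor of a rational function with trivial divisor on the normal projective $X'$, hence constant --- a contradiction. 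Both proofs ultimately rest on the birational invariance of $\kappa^+$ in order to move between $X$ and the chosen model; you rightly flag this, and note that only the easy direction is needed, since any $\sF\subseteq\Omega^p_X$ pulls back to a subsheaf of $\Omega^p$ of the dominating model with the same $\kappa(\det)$.
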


\begin{proof} Let $Y$ be a $\mathbb{Q}$-factorial normal variety with at worst terminal singularities serving as a good minimal model for $X$.  Note that $K_{Y}$ is numerically trivial. Let $\pi:\wtilde Y \to Y$ be a resolution. Since $\kappa(\wtilde Y)=0$, $\wtilde Y$ is not uniruled. Let $\sF\subseteq \Omega_{\wtilde Y}^{p}$ be a coherent subsheaf with maximum Kodaira dimension, i.e. $\kappa(\det\sF)=\kappa^+(\wtilde Y)$. 

Let $C$ be an irreducible curve on $Y$ cut out by sufficiently general hyperplanes and let $\wtilde C$ to be the corresponding curve in $\wtilde Y$ . Now using the standard isomorphism:  $\Omega_{\wtilde Y}^p|_{\wtilde C} \cong K_{\wtilde Y}|_{\wtilde C} \otimes \wedge^{n-p} \sT_{\wtilde Y}|_{\wtilde C}$ , we get $\sF^*|_{\wtilde C}$ as a quotient of $K_{\wtilde Y}^*|_{\wtilde C} \otimes \Omega_{\wtilde Y}^{n-p}|_{\wtilde C}$. But $K_{\wtilde Y}^*$ is numerically trivial on $\wtilde C$ and $\Omega_{\wtilde Y}^{n-p}|_{\wtilde C}$ is nef by Miyaoka, so $\sF^*|_{\wtilde C}$ must also be nef and we have $$\deg(\det\sF|_{\wtilde C})\leq 0.$$  But this inequality holds for a covering family of curves and thus $\kappa(\sF) \leq 0$. 

\end{proof}

As was mentioned in the introduction (Theorem~\ref{campana}), assuming the good Minimal Model conjecture for varieties up to dimension $n$ and with zero Kodaira dimension, we have $\kappa=\kappa^+$ in the case of $n$-dimensional varieties of positive Kodaira dimension as well. See ~\cite[Prop. 3.10]{Ca95} for a proof. The main result of this paper is concerned with replacing this assumption with the abundance conjecture in lower dimensions.

\begin{rem} Following the recent developments in the minimal model program, we now know that we have a good minimal model when numerical Kodaira dimension is zero. The proposition 3.1 shows that $\kappa^+(X)$ also vanishes in this case. By \cite{Ca95} this implies in particular that nonsingular varieties with vanishing numerical dimension have finite fundamental groups as long as they have non-trivial holomorphic Euler characteristic (See~\ref{observation}).

\end{rem}

We will need the following lemmas in the course of the proof of our main result. 

\begin{lem}\label{lemma1} Let  $f:X \to Z$ be a surjective morphism with connected fibers between normal projective varieties $X$ and $Z$. Let $D$ be an effective $\mathbb{Q}$-Cartier divisor in $X$ that is numerically trivial on the general fiber of $f$. If D is $f$-nef, then there exist  biratioanl morphisms $\pi:\wtilde Z \to Z$, $\mu: \wtilde X \to X$, a $\mathbb{Q}$-Cartier divisor $G$ in $\wtilde Z$, and an equidimensional morphism $\wtilde f:\wtilde X \to \wtilde Z$ such that $\mu^*(D)=\wtilde f^*(G)$.

\end{lem}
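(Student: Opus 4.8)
The plan is to extract two facts from the hypotheses — that $D$ is \emph{vertical}, and that after an equidimensional modification its pullback is controlled fibrewise — and then to read off the divisor $G$ by a relative Hodge-index (Zariski's lemma) computation. First I would show $D$ is vertical. Let $F$ be a general fibre, so $F$ is projective and $D|_F \equiv 0$ by hypothesis. Since $D$ is effective, $D|_F$ is an effective divisor numerically equivalent to zero; intersecting with a power of an ample class on $F$ shows $(D|_F)\cdot A^{\dim F -1}>0$ if $D|_F\neq 0$, a contradiction. Hence $D|_F=0$, so no component of $D$ dominates $Z$; that is, $D$ is vertical. (I take $\dim F>0$; the generically finite case is degenerate and can be treated separately.)

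Next I would pass to an equidimensional model. By Raynaud--Gruson flattening applied to $f$, followed by normalization and resolution and, if necessary, Stein factorization absorbed into the base, I obtain birational morphisms $\pi\colon \wt Z\to Z$ and $\mu\colon \wt X\to X$ with $\wt f\colon \wt X\to \wt Z$ flat — hence equidimensional — with $\wt X,\wt Z$ normal and $\wt f$ having connected fibres. The pullback $\mu^*D$ is again effective; it is $\wt f$-nef, since any curve contracted by $\wt f$ pushes forward to a curve contracted by $f$ and $\mu^*D\cdot C'=D\cdot\mu_*C'\geq 0$; and it is vertical, because each of its components maps into $\supp D$, which is vertical. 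By equidimensionality every vertical prime divisor of $\wt X$ dominates a prime divisor of $\wt Z$, so I may group components: for each prime divisor $D_j\subset \wt Z$ one has $\wt f^*D_j=\sum_i m_{ij}F_i$, where the $F_i$ are exactly the components mapping onto $D_j$, and I write $\mu^*D=\sum_i a_i F_i$.

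The heart of the argument is to prove the coefficient identity $a_i=c_{D_j}\,m_{ij}$ for a constant $c_{D_j}$ depending only on $D_j$. Fix $D_j$ and choose a general complete-intersection curve $\gamma\subset \wt Z$ meeting $D_j$ transversally at a general point $w$, and put $S:=\wt f^{-1}(\gamma)$. By Bertini--Seidenberg, $S$ is normal, and $S\to\gamma$ is a fibration over a curve whose fibre over $w$ is $\sum_i m_{ij}(F_i\cap S)$, a connected divisor in $S$. Fixing a general ample $H$ on $\wt X$ and intersecting with $H^{\dim S-2}$, the relative Hodge index theorem (Zariski's lemma in this relative dimension) makes the pairing $(F_i\cap S,\,F_{i'}\cap S)\mapsto (F_i\cap S)\cdot(F_{i'}\cap S)\cdot H^{\dim S-2}$ negative semi-definite with kernel spanned by the fibre class. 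Now $\mu^*D$ is $\wt f$-nef, so $(\mu^*D)\cdot(F_i\cap S)\cdot H^{\dim S-2}\geq 0$ for every $i$, whereas $(\mu^*D)\cdot(\text{fibre})\cdot H^{\dim S-2}=0$ since $\mu^*D$ is vertical. As the fibre is the positive combination $\sum_i m_{ij}(F_i\cap S)$, all these nonnegative numbers must vanish, and negative semi-definiteness then forces $(\mu^*D)|_{S}$ near $w$ to be a multiple $c_{D_j}$ of the fibre. This gives $a_i=c_{D_j}m_{ij}$ for all $F_i$ over $D_j$. Setting $G:=\sum_j c_{D_j}D_j$, one obtains $\wt f^*G=\sum_j c_{D_j}\sum_i m_{ij}F_i=\sum_i a_i F_i=\mu^*D$, as required.

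The main obstacle is this last step: turning ``$\wt f$-nef and numerically trivial on the generic fibre'' into the exact fibre-proportionality of the coefficients of $\mu^*D$. It relies on the relative version of Zariski's lemma, obtained by cutting down to a surface-like situation with a general curve in the base and a fixed ample power, together with the technical points that the sliced total space $S$ is normal (so intersection theory is well behaved) and that restriction to $S$ faithfully records relative nefness. By contrast, the flattening that produces $\wt f$ is standard, and the verticality of $D$ is elementary; all the genuine content lies in controlling the vertical part of $\mu^*D$ over each prime divisor of $\wt Z$.
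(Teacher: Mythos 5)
Your proof is correct and follows essentially the same route as the paper's: Raynaud flattening to an equidimensional model, then $\wt f$-nefness combined with numerical triviality on the general fibre fed into the negative semi-definiteness of the fibre intersection matrix (Zariski's lemma after cutting down by ample divisors) to descend $\mu^*D$ to the base. The only organizational difference is that the paper first shows $\mu^*D$ is numerically trivial on \emph{every} fibre and then invokes the descent as a ``standard fact'' whose proof it only sketches, whereas you carry out that reduction explicitly over each prime divisor of $\wt Z$ and obtain the coefficient proportionality directly.
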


\begin{proof} The fact that we can modify the base of our fibration to get a morphism whose fibers are of constant dimension is guaranteed by \cite{Ray72}. This is called \emph{flattening} of $f$. Let $\wtilde X$ be a normal birational model of $X$ and $\wtilde Z$ a smooth birational model for $Z$ such that $\wtilde f: \wtilde X \to \wtilde Z$ is flat. 

If general fibers are curves, by assumption the degree of $\mu^*(D)$ on general fibers of $\wtilde f$ is zero. On the other hand $\mu^*(D)$ is effective and relatively nef, so it must be trivial on all fibers. This implies the existence of the required $\mathbb{Q}$-Cartier divisor $G$ in $\wtilde Z$.

In the case of higher dimensional fibers, $\mu^*(D)$ must still be numerically trivial on all fibers of $\wtilde f$. To see this, let $C$ be an irreducible curve contained in a $d$-dimensional non-general fiber $\wtilde F_0$ of $\wtilde f$. Then for a sufficiently general members $D_i$ of the linear system of an ample divisor $H$, we have
$$D_1 ...  D_{d-1}. \wtilde F_0=mC+C',$$
where $C'$ is an effective curve and $m$ accounts for the multiplicity of the irreducible component of $F_0$ containing $C$. Now since $\mu^*(D)$ is numerically trivial on the general fiber of $\wtilde f$, we have   $$\mu^*D.(mC+C')=0.$$ But $\mu^*(D)$ is $\wtilde f$-nef, so that $\mu^*(D).C=0$.

We know that $\mu^*(D)$ is effective, so $\mu^*(D)$ must be trivial on all fibers. Again this ensures the existence of a $\mathbb{Q}$-Cartier divisor $G$ in $\wtilde Z$ such that $\mu^*(D)=\wtilde f^*(G)$.

\end{proof}

In the course of the proof of lemma 3.3 we repeatedly used the standard fact that given a surjective morphism $f:X \to Z$ between normal varieties $X$ and $Z$, where $Z$ is $\mathbb{Q}$-factorial, and an effective $\mathbb{Q}$-Cartier divisor $D$ that is trivial on all fibers, we can always find a $\mathbb{Q}$-Carier divisor $G$ in $Z$ such that $D=f^*(G)$. One can verify this by reducing it to the case where $X$ is a surface and $Z$ is a curve. Here the negative semi-definiteness of the intersection matrix of the irreducible components of singular fibers establishes the claim.

\

For application a natural setting for lemma~\ref{lemma1} is the relative minimal model program. The following is a reformulation of this lemma in this context. 

\

\begin{lem}\label{lemma2} Let $f:X \to Z$ be a surjective morphism with connected fibers between nonsingular projective varieties $X$ and $Z$ with dimension $n$ and $m$ respectively. Assume $\kappa(X) \geq 0$ and that $X/Z$ has a minimal model model $Y/Z$. Denote the morphism between $Y$ and $Z$ by $\psi$. Also assume that the abundance conjecture for varieties of vanishing Kodaira dimension holds in dimension $n-m$. If the Kodaira dimension of the general fiber of $f$ is zero, then there exist birational morphisms $\pi: \wtilde Z \to Z$, $\mu: \wtilde Y \to Y$, a $\mathbb{Q}$-Cartier divisor G in $\wtilde Z$, and an equidimensional morphism $\wtilde \psi: \wtilde Y \to \wtilde Z$ such that $\mu^*(K_Y)=\wtilde \psi^*(G)$. 

$$
  \xymatrix{
    X \ar@{.>}[r]  \ar[dr]  & Y  \ar[d]^{\psi}  & \wtilde Y  \ar[l]_{\mu} \ar[d]^{\wtilde {\psi}}  \\
      & Z   & \wtilde Z \ar[l]^{\pi}
  }
  $$

\end{lem}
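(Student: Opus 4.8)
The plan is to read Lemma~\ref{lemma2} as a direct application of Lemma~\ref{lemma1} to the fibration $\psi \colon Y \to Z$ and the divisor $D = K_Y$. Because $Y/Z$ is a minimal model, $Y$ is $\mathbb{Q}$-factorial with terminal singularities, so $K_Y$ is $\mathbb{Q}$-Cartier and $\psi$-nef. The morphism $\psi$ is surjective with connected fibers: these properties pass from $f$ through the birational contraction of $X$ to $Y$ over $Z$, and connectedness follows by Stein factorization once one knows the general fiber of $\psi$ is birational to the connected fiber $X_z$ of $f$ (a finite morphism onto the normal base $Z$ with connected general fibers is an isomorphism). Thus all the structural hypotheses of Lemma~\ref{lemma1} are in place; what must still be verified are effectivity and numerical triviality of $K_Y$ on the general fiber.

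The heart of the argument, and the step where the abundance hypothesis is used, is to show that $K_Y$ is numerically trivial on the general fiber of $\psi$. For general $z \in Z$ the fiber $Y_z$ has dimension $n-m$ and terminal singularities, and by adjunction $K_Y|_{Y_z} = K_{Y_z}$; this restriction is nef since $K_Y$ is $\psi$-nef. Restricting the birational contraction to fibers gives $\kappa(Y_z) = \kappa(X_z) = 0$. Invoking the abundance conjecture in dimension $n-m$ shows that the nef divisor $K_{Y_z}$ is semi-ample, and since its Iitaka dimension equals $\kappa(Y_z) = 0$ the associated morphism is constant, so $K_{Y_z} \equiv 0$. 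Hence $K_Y$ is numerically trivial on the general fiber of $\psi$.

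To obtain an effective representative I would use that $\kappa(X) \geq 0$ forces $\kappa(Y) \geq 0$, since Kodaira dimension is a birational invariant among terminal varieties; thus some positive multiple $\ell K_Y$ is linearly equivalent to an effective divisor $D$. As $D \equiv \ell K_Y$, this $D$ is $\psi$-nef and numerically trivial on the general fiber, so Lemma~\ref{lemma1} applies to the pair $(\psi, D)$ and produces $\pi \colon \wtilde Z \to Z$, $\mu \colon \wtilde Y \to Y$, an equidimensional $\wtilde\psi \colon \wtilde Y \to \wtilde Z$, and a $\mathbb{Q}$-Cartier divisor $G'$ on $\wtilde Z$ with $\mu^*(D) = \wtilde\psi^*(G')$. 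Dividing by $\ell$ and setting $G = G'/\ell$ then gives $\mu^*(K_Y) = \wtilde\psi^*(G)$, the stated conclusion (read up to $\mathbb{Q}$-linear equivalence).

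I expect the main obstacle to be the second step. Nefness together with $\kappa = 0$ does not by itself force numerical triviality, because the numerical dimension of $K_{Y_z}$ could a priori exceed its Iitaka dimension; it is exactly the abundance conjecture in dimension $n-m$ that rules this out by upgrading nefness to semi-ampleness. Beyond this, one should keep track of the mild gap between honest equality of divisors and $\mathbb{Q}$-linear equivalence introduced by passing to the effective multiple $D$, though this is harmless for the Kodaira-dimension computations the lemma is meant to feed.
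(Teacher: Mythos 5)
Your proposal is correct and follows essentially the same route as the paper, which likewise deduces from the abundance hypothesis that the canonical bundle of the general fiber is torsion and then applies Lemma~\ref{lemma1} to $\psi\colon Y\to Z$ with $D=K_Y$. In fact you are more careful than the paper's two-line proof: your step of passing to an effective $\mathbb{Q}$-divisor representing a multiple of $K_Y$ (using $\kappa(Y)=\kappa(X)\geq 0$) addresses the effectivity hypothesis of Lemma~\ref{lemma1}, which the paper silently glosses over.
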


\begin{proof} Since $K_Y$ is $\psi$-nef and that the dimension of the general fibers is $n-m$, we find that  the canonical of the general fiber is torsion by the abundance assumption. Now apply lemma 3.3 to $\psi :Y \to Z$ and take $K_Y$ to be $D$.

\end{proof}

We now turn to another crucial ingredient that we shall use in the proof of~\ref{proposition}. 

\begin{lem} Let $f:X \to Z$ be a surjective morphism with connected fibers between normal projective varieties $X$ and $Z$ of dimension $n$ and $k$ respectively. Let $D$ be a $\mathbb{Q}$-Cartier divisor in $Z$. If $f^*D$ is dual to the cone of moving curves then so is $D$.

\end{lem}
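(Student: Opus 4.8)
The plan is to test $D$ against an arbitrary moving curve on $Z$ and reduce the desired inequality to the corresponding one on $X$ by the projection formula. By \cite{BDPP} the statement that $D$ is dual to the cone of moving curves means precisely that $D\cdot\delta\geq 0$ for every class $\delta$ in the (closed) cone of moving curves of $Z$. Since that cone is the closure of the cone spanned by classes of families of irreducible curves dominating $Z$, and the intersection pairing is continuous, it suffices to verify $D\cdot\delta\geq 0$ when $\delta=[\delta_t]$ is the class of a general member of a family $\{\delta_t\}_{t\in T}$ of irreducible curves sweeping out $Z$.

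The geometric heart is to lift such a family to $X$. Fix a very ample divisor $H$ on $X$ and write $n-k=\dim X-\dim Z$ for the dimension of a general fiber of $f$. For general $t$ the preimage $f^{-1}(\delta_t)$ has dimension $n-k+1$, and I would cut it with $n-k$ general members $H_1,\dots,H_{n-k}$ of $|H|$ to obtain a curve $\wtilde\delta_t:=f^{-1}(\delta_t)\cap H_1\cap\dots\cap H_{n-k}$. Letting $t$ and the $H_i$ vary produces an algebraic family of curves dominating $X$: for a general point $x\in X$ we have $f(x)\in\delta_t$ for some $t$, and $n-k$ general members of $|H|$ through $x$ meet the $(n-k+1)$-dimensional variety $f^{-1}(\delta_t)$ in a curve passing through $x$. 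Hence $[\wtilde\delta_t]$ is a moving class on $X$.

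Next I would compute the pushforward. The restriction $f|_{\wtilde\delta_t}\colon\wtilde\delta_t\to\delta_t$ is dominant and generically finite of degree $d=(H|_F)^{\,n-k}>0$, where $F$ is a general fiber of $f$, since over a general point $z\in\delta_t$ the intersection $\wtilde\delta_t\cap f^{-1}(z)$ consists of $(H|_{F_z})^{\,n-k}$ points. Therefore $f_*[\wtilde\delta_t]=d\,[\delta_t]$ in $N_1(Z)$, and the projection formula yields
$$
 f^*D\cdot\wtilde\delta_t \;=\; D\cdot f_*[\wtilde\delta_t]\;=\;d\,(D\cdot\delta_t)\;=\;d\,(D\cdot\delta).
$$
Because $f^*D$ is dual to the cone of moving curves of $X$ and $[\wtilde\delta_t]$ is a moving class, the left-hand side is non-negative; as $d>0$, this forces $D\cdot\delta\geq 0$. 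Since $\delta$ was the class of an arbitrary dominating family, and such classes generate the cone of moving curves of $Z$, we conclude that $D\cdot\delta\geq 0$ for every moving class, i.e.\ that $D$ is dual to the cone of moving curves.

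The only step that requires genuine care is the verification that the lifted curves $\wtilde\delta_t$ really form a covering family on $X$, so that their class is moving, together with the strict positivity of the covering degree $d$; everything else is a formal consequence of the projection formula and the \cite{BDPP} duality. If one wishes to apply that duality only on smooth varieties, one can first pass to resolutions of $X$ and $Z$ and carry out the same lifting construction there, which does not affect the argument.
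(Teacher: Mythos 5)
Your argument is correct and is essentially the proof in the paper: the paper likewise tests $D$ against curves of the form $C=H_1\cdots H_{k-1}$, lifts them to $X$ by intersecting $f^{-1}(C)$ with $n-k$ general ample divisors to get a moving curve $C'$ with $(\deg f|_{C'})\,D\cdot C=f^*D\cdot C'\geq 0$, and handles general moving curves and singularities by passing to modifications. The only cosmetic difference is that you start from the covering-family description of the moving cone (invoking the BDPP identification), whereas the paper starts from pushforwards of complete intersections on modifications and treats the birational case separately.
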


\begin{proof} First assume that $f$ is birational. Let $C$ be a moving curve in $Z$ and let $\mu: \wtilde Z \to Z$, be a birational morphism such that $\mu_*(\wtilde C)=C$, where $\wtilde C$ is a complete intersection curve cut out by hyperplanes. Let $\pi: \wtilde X \to X$ be a suitable modification such that $\wtilde f: \wtilde X \to \wtilde Z$ is a morphism and we have the following commutative diagram.

 $$
  \xymatrix{
    \wtilde X \ar[r]^{\wtilde f }  \ar[d]_{\pi} & \wtilde Z  \ar[d]^{\mu} \\
      X \ar[r]^f       & Z
  }
  $$

Now let $\wtilde C=H_1 ...  H_{k-1}$, where $H_1$,..., $H_{k-1}$ are ample divisors in $\wtilde Z$. We have

\begin{align*}
\mu^*D.\wtilde C &= \mu^*D.H_1 ... H_{k-1}\\
        &=\wtilde f^*(\mu^*D). \wtilde f^*H_1 ...  \wtilde f ^*H_{k-1}\\
        &=\pi^*(f^*D). \wtilde f^*H_1 ...  \wtilde f ^*H_{k-1} && \text{by commutativity of the diagram.}\\
      \end{align*}
 
 Clearly $\pi^*(f^*D)$ is pseudo-effective. Now since nef divisors are numerically realized as limit of ample ones we have 
$$\pi^*(f^*D). \wtilde f^*H_1 ...  \wtilde f ^*H_{k-1} \geq 0,$$
which implies $\mu^*(D).\wtilde C \geq 0$. So that $D.C \geq 0$ as required.

Now assume that $f$ is not birational and let $C=H_1...H_{k-1}$ be an irreducible curve cut out by ample divisors in $Z$. In particular $C$ is of constant dimension along the image of fibers. After cutting down by general hyperplanes $H'_1$,..., $H'_{n-k}$, we can find an irreducible curve $$C'=H'_1...H'_{n-k}.f^*(H_1)...f^*(H_{k-1}) $$ that maps surjectively onto $C$. Thus we have $(\deg f|_{C'})D.C=f^*D. C' \geq 0$. 

For a moving curve that is not given by intersections of hyperplanes, we repeat the same argument as above after going to a suitable modification.
\end{proof}

\

\begin{rem} We know by \cite{BDPP} that for nonsingular projective varieties, pseudo-effective divisors are dual to the cone of moving curves. Using the lemma above, we can easily extend this to normal varieties by going to a resolution. This fact is of course already well known. For convenience  we rephrase the lemma 3.5 as follows:

\

\emph{Lemma 3.5'}. Let $f:X \to Z$ be a surjective morphism with connected fibers between normal projective varieties. Let $D$ be a $\mathbb{Q}$-Cartier divisor in $Z$. If $f^*D$ is \emph{pseudo-effective} then so is $D$. 

\end{rem}

\

We shall prove Theorem~\ref{general} as a consequence of the following proposition:

\begin{prop}\label{proposition} Let X be a nonsingular projective variety of dimension $n$ with $\kappa(X)\geqslant0$. 
Assume that the good minimal model conjecture holds for terminal projective varieties with zero Kodaira dimension up to dimension $n-m$, where $m>0$. Let $\sF\subseteq \Omega^p_X$ be a coherent subsheaf and define the line bundle $L=\det \sF$. If $\kappa(K_X+L)\geqslant m$, then $\kappa(L)\leqslant \kappa(X)$. 

\end{prop}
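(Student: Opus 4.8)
The plan is to argue by induction on $n=\dim X$, with the generic pseudo-effectivity of Theorem~\ref{pe} as the engine: applied to the (saturated) torsion-free quotient $\Omega^p_X/\sF$ it shows that $cK_X-L$ is pseudo-effective, where $c=\binom{n-1}{p-1}$ and I have used $\det\Omega^p_X=cK_X$. Since $\kappa$, $\kappa^+$ and all hypotheses are birational invariants, I will replace $X$ by smooth models making the relevant Iitaka fibrations morphisms, and I separate the cases $\kappa(X)\geqslant 1$ and $\kappa(X)=0$.

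\emph{The case $\kappa(X)\geqslant 1$.} Here I would descend along the Iitaka fibration $g\colon X\to W$ of $K_X$, so that $\dim W=\kappa(X)$ and the general fibre $G$ is non-uniruled (because $X$ is) with $\kappa(G)=0$ and $\dim G=n-\kappa(X)<n$. Iitaka's easy addition gives $\kappa(L)\leqslant\kappa(X)+\kappa(L|_G)$, reducing everything to the estimate $\kappa(L|_G)\leqslant 0$. Since $K_X|_G\cong K_G$, easy addition applied to $K_X+L$ yields $\kappa(K_G+L|_G)\geqslant\kappa(K_X+L)-\kappa(X)\geqslant m-\kappa(X)$, which is exactly the input that lets $G$ inherit the hypotheses of Proposition~\ref{proposition} in dimension $n-\kappa(X)$ (with $m$ lowered to $m-\kappa(X)$); when $\dim G\leqslant n-m$ one may instead invoke Proposition 3.1 directly, as $G$ then carries a good minimal model with $\kappa(G)=0$. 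A filtration argument built from $0\to g^*\Omega_W|_G\to\Omega_X|_G\to\Omega_G\to 0$, whose kernel is trivial on the general fibre, reduces $\sF|_G\subseteq\Omega^p_X|_G$ to subsheaves of the intrinsic $\Omega^\bullet_G$; combined with the inductive bound $\kappa^+(G)\leqslant\kappa(G)=0$ this gives $\kappa(L|_G)\leqslant 0$, and hence $\kappa(L)\leqslant\kappa(X)$.

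\emph{The case $\kappa(X)=0$.} This is the decisive case, and it is where $\kappa(K_X+L)\geqslant m$ enters. Let $f\colon X\to Z$ be the Iitaka fibration of $K_X+L$; then $\dim Z=\kappa(K_X+L)\geqslant m$, so the general fibre $F$ has $\dim F\leqslant n-m$ and, being non-uniruled, admits a good minimal model. Kawamata's subadditivity for fibrations whose general fibre has a good minimal model gives $0=\kappa(X)\geqslant\kappa(Z)+\kappa(F)$, hence $\kappa(F)=0$, and Lemma~\ref{lemma2} applies: on the resulting model $\wt\psi\colon\wt Y\to\wt Z$ the canonical class is a pull-back, $\mu^*K_Y=\wt\psi^*G$ with $\kappa(\wt Z,G)=\kappa(X)=0$, and the fibres of $\wt\psi$ have numerically trivial canonical class. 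Transporting the $\mathbb{Q}$-effective class $K_X+L$ and the pseudo-effective class $cK_X-L$ to $\wt Y$ and restricting to a fibre, the triviality $K\equiv 0$ there forces the transform of $K_X+L$ to be simultaneously pseudo-effective and to have pseudo-effective negative, hence numerically trivial on fibres; so it too descends, $K_X+L\equiv\wt\psi^*A$, with $A$ big on $\wt Z$ because $\kappa(\wt Z,A)=\dim Z$. Finally $cK_X-L\equiv\wt\psi^*\bigl((c+1)G-A\bigr)$, and pushing its pseudo-effectivity down to $\wt Z$ by Lemma~3.5$'$ shows $(c+1)G-A$ pseudo-effective; then $(c+1)G$ is big, forcing $\kappa(\wt Z,G)=\dim Z\geqslant m$, contradicting $\kappa(\wt Z,G)=0$. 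Hence no such $\sF$ exists when $\kappa(X)=0$, and the conclusion holds vacuously.

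I expect the case $\kappa(X)=0$ to be the main obstacle: the descent must be carried out on the relative minimal model produced by Lemma~\ref{lemma2}, and one has to verify that the birational transforms of $L$ and of $cK_X-L$ stay pseudo-effective and that the correction terms do not spoil either the numerical triviality of $(K_X+L)|_F$ or the Iitaka dimensions on $\wt Z$. A second, more routine, difficulty present in both cases is the bookkeeping of the induction—arranging that the lower-dimensional fibre $G$ satisfies exactly the numerical hypothesis $\kappa(K_G+L|_G)\geqslant m-\kappa(X)$ within the available minimal-model range $n-m$—together with the filtration step that converts the extrinsic restriction $\sF|_G\subseteq\Omega^p_X|_G$ into honest subsheaves of $\Omega^\bullet_G$ while keeping control of $\det\sF|_G$.
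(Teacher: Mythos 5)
Your Case 2 is, in structure, the paper's actual proof — and that argument in fact runs uniformly for every $\kappa(X)\geq 0$, which makes your case division unnecessary: once $G$ is shown to be big on $\wtilde Z$ one reads off $\kappa(L)=\kappa(\wtilde L_1)\leq\dim\wtilde Z=\kappa(G)=\kappa(K_X)$ directly, with no contradiction needed when $\kappa(X)\geq 1$. The genuine gap is in the decisive step of that case: getting $L$ to descend to the base $Z$ of the Iitaka fibration of $K_X+L$. You establish only that $L$ is numerically trivial on the general fibre and then write $K_X+L\equiv\wtilde\psi^*A$ with ``$A$ big because $\kappa(\wtilde Z,A)=\dim Z$''. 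Numerical equivalence does not transport Iitaka dimension, so from $K_X+L\equiv\wtilde\psi^*A$ and $\kappa(K_X+L)=\dim Z$ you cannot conclude that $A$ is big; likewise the identity $(c+1)G=(cG-\wtilde L_1)+(G+\wtilde L_1)$ and the application of Lemma~3.5$'$ require actual pullback relations, not numerical ones, and numerical triviality on the \emph{general} fibre alone does not even give descent (Lemma~\ref{lemma1} needs effectivity and relative nefness to propagate triviality to all fibres). The paper's Claim~\ref{claim} supplies the missing idea: since $\kappa(L|_F)=0$, the fibration $i_{K_X+L}$ factors through the Iitaka fibration $i_L$ of $L$, so one may replace $L$ by the honest pullback $L''$ of an ample divisor $H$ from the image of $i_L$; this keeps $\kappa(L'')=\kappa(L)$ and keeps $rK_X-L''$ pseudo-effective because $\mu^*L=\mu^*L''+A$ with $A$ effective. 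Without this replacement the descent to $\wtilde Z$ does not go through.

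Your Case 1 has an independent problem. The filtration of $\Omega^p_X|_G$ coming from $0\to g^*\Omega^1_W|_G\to\Omega^1_X|_G\to\Omega^1_G\to 0$ writes $\det\sF|_G$ as a tensor product $\bigotimes_i\det\sF_i$ with $\sF_i\subseteq(\Omega^{p-i}_G)^{\oplus N_i}$; but Kodaira dimension is not subadditive under tensor products of line bundles, so $\kappa(\det\sF_i)\leq 0$ for all $i$ does not yield $\kappa(\det\sF|_G)\leq 0$. (The correct way to run such an argument, as in Proposition~3.1, is at the level of degrees on curves cut out by sufficiently ample divisors, where the inequalities do add.) Moreover, the inductive application of the proposition to $G$ would require each graded piece to sit inside a single $\Omega^q_G$ and to satisfy $\kappa(K_G+\det\sF_i)\geq m-\kappa(X)$, neither of which follows from your easy-addition estimate for $K_G+L|_G$. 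Since the single-fibration argument of the paper already covers $\kappa(X)\geq 1$, the cleanest repair is to delete Case 1 entirely and to fix the descent step in Case 2 as above.
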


\begin{proof} First a few observations. The  isomorphism $K_X^*\otimes \Omega^p_X \cong \wedge^{n-p} \sT_X$ implies that $K^*_X\otimes \sF$ is a subsheaf of $\wedge^{n-p} \sT_X$.  But X is not uniruled and so by~\ref{pe} $rK_X-L$  is pseudo-effective as a Cartier divisor, where $r$ is the rank of $\sF$.

We can of course assume that X is not general type. Now if we assume that $K_X+L$ is big then by using the  equality $(r+1)K_X=(rK_X-L) + (K_X+L)$ and pseudo-effectivity of $rK_X-L$, we conclude that $K_X$ must be big as well. So we may also assume that $K_X+L$ is not big and that $\kappa(L)>0$. 

Without loss of generality we can also assume that the rational map $X\dashrightarrow Z$ corresponding to $K_X+L$ is a morphism, since we can always go to a suitable modification, pull back $L$ and prove the theorem at this level. Denote this map by $i_{K_X+L}$ and note that by definition we have $\kappa((K_X+L)|_F)=0$ , where $F$ is the general fiber of $i_{K_X+L}$. Finally, we observe that $\kappa(F) \leq \kappa((K_X+L)|_F)=0$ and as we are assuming that $X$ has non-negative Kodaira dimension, we have $\kappa(F)=0$.

\begin{claim}\label{claim} Without loss of generality, we can assume $L$ is the pull back of a $\mathbb{Q}$-Cartier divisor $L_1$ in $Z$. 
\end{claim}

Assuming this claim for the moment, our aim is now to show that after a modification $\pi:\wtilde Z \to Z$, we can find a big divisor in $\wtilde Z$ whose Kodaira dimension matches that of $X$. This will imply that $\kappa(L)=\kappa(L_1) \leq \kappa(X)$, as required.

To this end take $Y$ to be a relative minimal model for $X$ over $Z$ and denote the birational map between $X$ and $Y$ by $\phi$ and the induced morphism $Y \to Z$ by $\psi$ (See the diagram below). Observe that we can assume that $\phi$ is a morphism without losing generality. Denote $\psi^*(L_1)$ by $L_Y$. Fix $K_Y$ to be the cycle theoretic push forward of $K_X$. 

Now by lemma~\ref{lemma2} and the abundance assumption after modifying the base by $\pi: \wtilde Z \to Z$, we can find a morphism $\wtilde {\psi}:\wtilde Y \to \wtilde Z$ such that the dimension of the fibers of this new fibration are all the same and $\mu^*(K_Y)=\wtilde \psi^*(G)$ for some $\mathbb{Q}$-Cartier divisor $G$ in $\wtilde Z$.

$$
  \xymatrix{
    X \ar[r]^{\phi}  \ar[dr]_{i_{K_X+L}} & Y  \ar[d]^{\psi}  & \wtilde Y  \ar[l]_{\mu} \ar[d]^{\wtilde {\psi}}  \\
      & Z   & \wtilde Z \ar[l]^{\pi}
  }
  $$
Noting that $Y$ is at worst terminal, i.e. $K_X+L=\phi^*(K_Y+L_Y)+E$ for an effective exceptional divisor $E$, we have $\kappa(K_X+L)=\kappa(K_Y+L_Y)$. We also observe that $rK_Y -L_Y$ must be pseudo-effective. 

Define $ \wtilde L_1:=\pi^*(L_1)$, so that $\mu^*(L_Y)=\wtilde \psi^*(\wtilde L_1)$ and $\wtilde \psi^*(G+\wtilde L_1)=\mu^*(K_Y+L_Y)$. This implies that $G+\wtilde L_1$ is big in $\wtilde Z$. We also know that $\mu^*(rK_Y-L_Y)$ is pseudo-effective and $\mu^*(rK_Y-L_Y)=\wtilde \psi^*(rG-\wtilde L_1)$. Thus by lemma 3.4, $rG-\wtilde L_1$ is pseudo-effective too. Additionally we have 
$$(r+1)G=(rG-\wtilde L_1)+(G+\wtilde L_1),$$

where the right hand side is a sum of pseudo-effective and big divisors. This implies that G is big and we have $$\kappa(L)=\kappa(\wtilde L_1) \leq \kappa(G)=\kappa(\mu^*(K_Y))=\kappa(K_X).$$

\end{proof}

Now it remains to prove~\ref{claim}. 

\begin{proof}[Proof of~\ref{claim}] Let $X \dashrightarrow Z'$ be the map given by the global sections of large enough multiple of L, and let $i_L:X' \to Z'$ be the Iitaka fibration corresponding to L, where $\mu:X' \to X$ is a suitable modification of $X$. As $\kappa(K_X) \geq 0$, we have $\kappa(L) \leq \kappa(K_X+L)$, where the right hand side of this inequality is zero on the general fiber of $i_{K_X+L}$. On the other hand since we have assumed $\kappa(L)$ to be positive, we find that $\kappa(L|_F)=0$. Hence $i_{K_X+L}$ factors through $i_L$ via a rational map $g$ and we have the following commutative diagram:

 $$
 \xymatrix{
 X' \ar[d]_{\mu}  \ar[r]^{i_L} & Z' \\
 X \ar[r]_{i_{K_X+L}}  \ar@{.>}[ur] & Z \ar@{.>}[u]_{g}
 }
 $$

Now by considering suitable modifications of $X$, $Z$ and $X'$, we can assume that $g$ is a morphism. Define the line bundle $L':= \mu^*(L)-A=i_L^*(H)$, where A is an effective divisor and H is an ample $\mathbb{Q}$-Cartier divisor in $Z'$. Let $L"$ be the pull back of $H$ in $X$ via $g$ and $i_{K_X+L}$, so that $\mu^*(L")=L'$ and that $\mu^*(L")+A=\mu^*(L)$. We claim that we don't lose generality if we replace $L$ by $L"$. To see this we need to check the following two properties: (i) $rK_X-L"$ is pseudo-effective and (ii) $\kappa(L")=\kappa(L)$.

To see that (i) holds, note that we have $\mu^*(rK_X-L")=\mu^*(rK_X)-(\mu^*(L)-A)=\mu^*(rK_X-L)+A$. Now since $rK_X-L$ is pseudo-effective and $A$ is effective, $rK_X-L"$ must also be pseudo-effective. 

 For (ii) it suffices to show $\kappa(L)=\kappa(L')$ which is a consequence of the following inequality:

$$\kappa(L)=\kappa(\mu^*L) \leq dim Z'=\kappa(L').$$

This finishes off the proof of Claim~\ref{claim} after a possible base change corresponding to $K_X+L"$.

\end{proof}

Now our main result immediately follows:

\begin{proof}[Proof of Theorem~\ref{general}] Let $\sF \subseteq \Omega_X^p$ be a coherent subsheaf with maximum Kodaira dimension, i.e. $\kappa(L)=\kappa^+(X)$, where $L=\det(\sF)$. Assume that $\kappa(L)>\kappa(X)$. Then $\kappa(L) \geq m$ and in particular we have $\kappa(K_X+L) \geq m$. Now the proposition above implies that $\kappa(L) \leq \kappa(X)$, which is a contradiction.

\end{proof}

\

As we discussed in the introduction, this greatly improves the Bogomolov's inequality for projective varieties of dimension at most five and with relatively small Kodaira dimension. 

\

\begin{rem}[Birational stability in dimension $4$]\label{remark}Theorem~\ref{equality} can be further strengthened by replacing $\kappa^+$ by a stronger birational invariant $\omega(X)$ (See~\ref{stability} for the definition) which measures the maximal positivity of coherent rank one subsheaves of $\Omega^1_X$$^{\otimes m}$, for any $m>0$, i.e. $\kappa(X)$ and $\omega(X)$ coincide for fourfolds with non-negative Kodaira dimension. The proof is identical to that of Theorem~\ref{equality} by observing that pseudo-effectivity of $rK_X-L$ in~\ref{proposition} can be be replaced by that of $mK_X-L$, where m denotes the tensorial power of cotangent bundle containing the line bundle $L$ .

\end{rem}
\

\begin{rem}We would like to point out that when $\kappa(X) \geq  \dim X - 3$, we have $\kappa=\kappa^+$ by \cite[Prop. 10.9]{Ca95} , where $3$ in this inequality comes from the abundance result for varieties of dimension at most $3$. So the real improvement  provided by~\ref{equality} is when $\kappa=0$ in dimension $4$ and $\kappa=1$ in dimension $5$. 
\end{rem}

\

\section{Appendix: Birational Stability of the Cotangent Bundle \\ (by Fr\'ed\'eric Campana)}

We present here a new birational invariant $\omega(X)$ similar to the Kodaira dimension $\kappa(X)$, at least equal to $\kappa(X)$ and to our previous $\kappa^+(X)$ and $\kappa_+(X)$, and conjecturally equal to all these when $X$ is not uniruled. The preceding arguments of Behrouz Taji directly apply to show this conjecture in dimension $4$ as well, and also under the situations considered in his theorem \ref{general} above. This invariant can be introduced in the orbifold compact K\" ahler case as well, with similar expected properties.

 A major aim of algebraic geometry consists in deriving the qualitative geometry of a complex connected projective manifold $X$ from the positivity/negativity properties of its canonical bundle $K_X$ (e.g: it has been shown that $X$ is rationally connected, hence simply connected when its canonical bundle is anti-ample). An intermediate step consists in relating the positivity/negativity of its cotangent bundle $\Omega^1_X$ from the one of $K_X$.

The positivity of $K_X$ is suitably measured by the canonical (or Kodaira) dimension of its canonical algebra $K(X):=\oplus_{m\geq 0}H^0(X,K_X^{\otimes m})$, defined as: $\kappa(X):=max_{\{m>0\}}(dim\Phi_m(X))\in\{-\infty, 0,1,\dots,n\}$, where $\Phi_m:X\dasharrow \Bbb P(H^0(X,K_X^{\otimes m})^*)$ is the rational map defined by the linear system $H^0(X,K_X^{\otimes m})$ if this is nonzero, and is $-\infty$ otherwise.

In a similar way, we define: $\Omega(X):=\oplus_{m\geq 0}H^0(X,\Omega^1_X$$^{\otimes m})$, to be the the cotangent algebra of $X$, and its dimension to be:
 $\omega(X):=max_{\{m>0, L\}}(dim \Phi_{ L}(X))\in \{-\infty, 0,1,\dots,n\}$, where $L\subset \Omega^1_X$$^{\otimes m}$ ranges over all of its coherent rank one subsheaves, with $m>0$ arbitrary. Here, $\Phi_LX\dasharrow \Bbb P(H^0(X,L)^*)$ is the rational map associated with the linear system defined by the sections of $L$.
  
  \
  
  {\bf Basic properties:}\label{stability} 1. $\omega(X)\geq \kappa(X)$ is a birational invariant of $X$. We say that $\Omega^1_X$ is birationally stable if $\omega(X)=\kappa(X)$, which means that $\kappa(X,L)\leq \kappa(X)$, for any $L\subset \Omega^1_X$$^{\otimes m}$, coherent of rank $1$, if $m>0$ is arbitrary. We shall see below that this happens, conjecturally, if and only if $X$ is not uniruled (or if and only if $\kappa(X)\geq 0$).
  
  2. Recall that we defined in \cite{Ca95} and \cite{Ca04} two other invariants: 
  $\kappa^+(X):=max_{\{F\subset \Omega^p_X,p>0\}}(\kappa(det(F))$, and:
  $\kappa_+(X):=max_{\{L\subset \Omega^p_X,p>0\}}(\kappa(L))$, where $F$ and $L$ are arbitrary coherent subsheaves, $L$ being of rank one.
   
    We thus have: $\omega(X)\geq \kappa^+(X)\geq \kappa_+(X)\geq \kappa(X)$ in general.
  
  3. Moreover, $\Omega(X)$ is (in contrast to $K(X)$) functorial in $X$, that is: any dominant rational map $f: X\dasharrow Y$ induces naturally an injective algebra morphism $f^*:\Omega(Y)\to \Omega(X)$, and so  $\omega(X)$ is increasing: $\omega(X)\geq \omega(Y)$ in this situation. The invariant $\omega$ thus provides an obstruction to the existence of such an $f$.
  
  4. Let $f:X\to Y$ be a surjective morphism between projective manifolds $X,Y$. Let $X_y$ be its general fibre. If $\omega(X_y)=-\infty$ (resp. if $\omega(X_y)=0$), then $\omega(X)=\omega(Y)$ (resp. $\omega(X)\leq dim(Y)$). We omit the easy proof. 
  
  This inequality is, in general, optimal, as seen by considering the Moishezon-Iitaka fibration $f:X\to Y$ of a manifold $X$ with $\kappa(X)\geq 0$, such that $c_1(X_y)=0$ (the examples below indeed show that $\omega(X_y)=\kappa(X_y)=0$, then). 
  
  If $r_X:X\to R(X)$ is the rational quotient of $X$ (defined in \cite{Ca92}, and called `MRC fibration in \cite{KMM92}), the first statement implies that $\omega(X)=\omega(R(X))$. Recall that $r$ is the only fibration on $X$ having rationally connected fibres and non-uniruled base $R(X)$ (by \cite{GHS01}). 
  
  \
  
  {\bf Examples:} 1. If $X$ is rationally connected, $\omega(X)=-\infty$. Conjecturally, the opposite implication holds as well, and follows from the Abundance conjecture. 
  
  2. If $c_1(X)=0$, then $\omega(X)=0$, as seen from the existence of a Ricci-flat K\"ahler metric and Bochner principle. Alternatively, the general semi-positivity theorem of Myiaoka shows that $\omega (X)=0$ if $X$ has a birational (normal) model $X'$ such that $K_{X'}$ is numerically trivial over its non-singular locus (i.e.: such that its degree is zero on each projective curve not meeting its singular locus). The existence of such a model is implied by the Abundance conjecture, too. The Abundance conjecture (and Miyaoka's theorem) thus imply that $\omega(X)=0$ if $\kappa(X)=0$.
  
  \
  
  {\bf Conjecture:} For any $X$, we have: $\omega(X)=\kappa(R(X))$ (by convention: $\kappa(pt)=-\infty$).

 \
  
 {\bf Remark:} This conjecture follows from the Abundance conjecture. Indeed: we need only check that $\omega(R(X))=\kappa(R(X))$, since $\omega(X)=\omega(R(X))$, by the property 4 above. Since $R(X):=Y$ is not uniruled, we have: $\kappa(Y)\geq 0$, by Abundance. Let $f:Y\to Z$ be the Moishezon-Iitaka fibration of $Y$. Since its general fibres have $\kappa=0$, they have $\omega=0$ as well by Abundance (example 2 above). Thus $\omega(Y)\leq dim(Z)=\kappa(Y)\leq \omega(Y)$. Thus $dim(Z)=\kappa(R(X))=\omega(Y)=\omega(R(X))$ $\square$
 
 \
 
 {\bf Remark:} The definition of $\Omega(X)$ and $\omega(X)$ with entirely similar properties can be extended to the case when $X$ is compact K\" ahler, and more importantly, when $X$ (possibly compact K\" ahler) is equipped with an orbifold divisor $\Delta:=\sum_ja_j.D_j$, where the $D_j's$ are irreducible pairwise distinct divisors on $X$ whose union is of normal crossings, and the $a_j's$ are in $\Bbb Q\cap[0,1]$. See \cite{Ca04} and \cite{Ca11} for the relevant definitions. The details will be written elsewhere.

\providecommand{\bysame}{\leavevmode\hbox to3em{\hrulefill}\thinspace}
\providecommand{\MR}{\relax\ifhmode\unskip\space\fi MR }
\providecommand{\MRhref}[2]{%
  \href{http://www.ams.org/mathscinet-getitem?mr=#1}{#2}
}
\providecommand{\href}[2]{#2}

\end{document}